\renewcommand{\a}{\alpha}
\renewcommand{\b}{\beta}
\newcommand{\g}{\gamma}
\newcommand{\G}{\Gamma}
\renewcommand{\d}{\delta}
\renewcommand{\i}{\iota}
\renewcommand{\O}{\Omega}
\renewcommand{\r}{\rho}
\newcommand{\s}{\sigma}
\renewcommand{\t}{\tau}
\newcommand{\e}{\varepsilon}
\newcommand{\f}{\varphi}
\newcommand{\Ac}{{\mathcal A}}
\newcommand{\Dc}{{\mathcal D}}
\newcommand{\Pc}{{\mathcal P}}
\newcommand{\N}{{\mathbb N}}
\newtheorem{theorem}{Theorem}[section]
\newtheorem{proposition}[theorem]{Proposition}
\newtheorem{lemma}[theorem]{Lemma}
\newtheorem{corollary}[theorem]{Corollary}
\newtheorem{fact}[theorem]{Fact}
\theoremstyle{definition}
\newtheorem{definition}[theorem]{Definition}
\newtheorem{notation}[theorem]{Notation}
\newtheorem{remark}[theorem]{Remark}
\theoremstyle{remark}
\newtheorem{question}[theorem]{Question}
\begin{document}

\title[Folner sets of alternate directed groups]
      {Folner sets of alternate directed groups}
\author[Brieussel]{J\'er\'emie Brieussel \\ \\ Universit\'e Montpellier 2 \\ France}
\email{jeremie.brieussel@univ-montp2.fr}

\date{April 12, 2013.}

\begin{abstract}
An explicit family of Folner sets is constructed for some directed groups acting on a rooted tree of sublogarithmic valency by alternate permutations. In the case of bounded valency, these groups were known to be amenable by probabilistic methods. The present construction provides a new and independent proof of amenability, using neither random walks, nor word length.
\end{abstract}

\maketitle

\section{Introduction}

By a criterion of Folner \cite{Fol}, amenable groups are those that admit finite subsets with arbitrary small boundaries relatively to their cardinality. A sequence of such subsets, called a Folner sequence, is easily described for abelian groups, and well-understood for some classes of solvable groups (\cite{PSC}, \cite{Ers}). Many non-solvable amenable groups are directed groups acting on rooted trees. This family of groups gathers many examples with "exotic" properties, such as infinite torsion groups of intermediate growth constructed by Aleshin (\cite{Ale}, \cite{Gri}) or groups with non-uniform exponential growth by Wilson \cite{Wil1}. 

Their amenability in the case of bounded valency was shown in \cite{Bri1} by use of Kesten's probabilistic criterion \cite{Kes}. The strategy, introduced by Bartholdi and Virag in \cite{BV}, is to show that a self-similar random walk on a Cayley graph diffuses slowly, in the sense that its return probability does not decay exponentially, or that its entropy is sublinear (\cite{KV}). The same method permits to show that automata groups are amenable when their activity is bounded \cite{BKN} or linear \cite{AAV}. Though it ensures their existence, such a probabilistic proof does not exhibit Folner sets. 

For the groups of \cite{Ale} and \cite{Gri}, subexponential growth easily implies the existence of a subsequence of the family of balls (for a word length) which is a Folner sequence, but it is not known if the whole sequence of balls is Folner and the subsequence (even though it has density $1$) is not explicit. Even for groups of polynomial growth, it is not elementary to show that balls form a Folner sequence, a result due to Pansu \cite{Pan}, using technics from Gromov \cite{Gro}.

The object of the present article is to exhibit explicit Folner sets for some groups with a property denoted $\Dc\Pc$, containing in particular directed groups acting on a rooted tree by alternate permutations. A group $\G$ with property $\Dc\Pc$ is defined (see section \ref{gen}) by two subgroups $A$ finite and $H$ finitely generated, together with an action on a rooted tree with valency sequence $(d_k)_{k \in \N}$. The main result is:
\begin{theorem}
Let $\G$ have property $\Dc\Pc$ with $H$ amenable and $\frac{d_k}{\log k} \rightarrow 0$, then the group $\G$ is amenable.
\end{theorem}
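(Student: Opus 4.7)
The strategy is to construct Folner sets of $\G$ inductively from the self-similar structure of the action on the tree, using as input a Folner sequence $(F_n)_{n \in \N}$ of $H$, which exists by amenability of $H$. At depth $N$, an element of $\G$ can be encoded by a portrait consisting of its action on the first $N$ levels of the tree (a finite datum built from alternate permutations coming from $A$) together with its sections at the $d_0 d_1 \cdots d_{N-1}$ vertices of level $N$; by property $\Dc\Pc$, these sections can be taken in a subgroup isomorphic to a suitable product involving $H$. The candidate Folner set at depth $N$ is a subset $E_N \subset \G$ obtained by letting the top data range over a well-chosen finite subset $T_N$ built from $A$ while placing, at each vertex $v$ of level $N$, a copy of $F_{n(N)}$ acting on the section coordinate; the integer $n(N)$ is a parameter to be tuned as a function of $N$.

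For a fixed generator $s$ of $\G$, right multiplication by $s$ affects only a bounded number of coordinates of this portrait: it modifies the top data and at most finitely many sections at level $N$. The Folner ratio $|\partial_s E_N|/|E_N|$ therefore decomposes into a top contribution, bounded by something of the form $|T_N \triangle T_N s|/|T_N|$ with $|T_N|$ comparable to $|A|^{d_0 \cdots d_{N-1}}$, and a bottom contribution of order $(d_0 \cdots d_{N-1}) \cdot |\partial F_{n(N)}|/|F_{n(N)}|$ reflecting the action of $s$ on one section. The hypothesis $d_k / \log k \to 0$ translates into
\[
\log (d_0 \cdots d_{N-1}) \;=\; \sum_{k=0}^{N-1} \log d_k \;=\; o(N),
\]
which allows $n(N)$ to be chosen large enough so that the bottom contribution goes to zero, while simultaneously $T_N$ can be arranged to form a Folner sequence for the permutation action of $A$ on level-$N$ portraits. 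Amenability of $\G$ then follows from the Folner criterion.

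The main obstacle is not the arithmetic at the end but the careful setup of the top sets $T_N$ and the identification of which sections of a generator actually matter. Directed groups typically possess directed generators whose sections are nontrivial along an entire infinite ray, so naively one would have to track infinitely many sections per generator and the argument would collapse. Exploiting the alternate permutation structure built into property $\Dc\Pc$ is what allows one to truncate this dependence and to ensure that, at depth $N$, a generator effectively modifies only a bounded number of coordinates; the rest is the quantitative balance between the two boundary contributions described above, where the condition $d_k / \log k \to 0$ is used in an essentially sharp way.
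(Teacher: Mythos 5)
Your overall frame (portrait up to level $N$ together with sections at level $N$ constrained by a Folner set of $H$) resembles the paper's construction, but the specific set you propose fails, and you have located the difficulty in the wrong place. Write a directed generator as $\f_0(b)=(b_1,a_2,\dots,a_{d_0})\r$ with $b_1\in H_1$ and $a_2,\dots,a_{d_0}\in A_1$, and iterate down to level $N$: the level-$N$ sections of $b$ are $b_N$ at the vertex $1^N$, the \emph{rooted} elements $a^{(N)}_2,\dots,a^{(N)}_{d_{N-1}}\in A_N$ at the sibling vertices $1^{N-1}t$ with $t\geq 2$, and $e$ elsewhere. Hence right multiplication of $g$ by $b$ multiplies the section of $g$ at $g^{-1}(1^N)$ by $b_N\in H_N$, but it \emph{also} multiplies the $d_{N-1}-1$ sections at the vertices $g^{-1}(1^{N-1}t)$ by elements of $A_N$. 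If, as you propose, every level-$N$ section of an element of $E_N$ is required to lie in a copy of $F_{n(N)}\subset H_N$, those sibling sections land in cosets $F_{n(N)}a^{(N)}_t$ which in general do not meet $H_N$ at all, so $gb\notin E_N$ for \emph{every} $g\in E_N$ and $\partial E_N=E_N$. This is exactly why the paper's base set $L_0^K(\O)$ places the Folner set $\O\subset H_{K+1}$ only at the first child of each level-$K$ vertex and lets the remaining children range over all of $A_{K+1}$. The price is that a directed generator then keeps $g$ in the set only when the permutation part sends the distinguished ray onto a first child, which happens for only a $1/d_K$ proportion of the set (Fact \ref{L0K}); so even the corrected level-$0$ set is far from Folner, and the real content of the proof is the recursive definition of $L_k^K(\O)$ (each level requiring at least one section in the combinatorial interior) together with the recursion $1-\e_{k+1}=(1-\e_k)/(1-\e_k^{d_{K-k-1}})$ of Lemma \ref{receps}, which boosts the interior proportion from about $1/d_K$ to $1-o(1)$ over $K$ levels. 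None of this appears in your proposal.

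Your quantitative use of the hypothesis is also incorrect: since $d_k\geq 2$, one has $\sum_{k<N}\log d_k\geq N\log 2$, so $\log(d_0\cdots d_{N-1})$ is never $o(N)$ --- not even for constant valency, which is the basic case of the theorem. The condition $d_k/\log k\to 0$ does not enter through the size of level $N$; it enters through the convergence of the recursion above (Lemma \ref{epsilon}, which needs $KE_K^{D(K)-1}\to\infty$ for some $E_K\to 0$, forcing $\max_{k\le K} d_k=o(\log K)$). Likewise there is no genuine issue with your ``top sets'': the level-$N$ portrait group $A_{N-1}\wr\dots\wr A_0$ is finite and a generator acts on it by right multiplication, hence bijectively, so no Folner argument is needed there. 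The missing idea is the interior-boosting recursion, and without it the argument does not go through.
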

In particular, the direct description of Folner sets provides a new proof, using neither random walks nor word length, that directed groups acting on a rooted tree of bounded valency are amenable (\cite{Bri1}). It also provides many new examples of amenable directed groups acting on a tree of unbounded sublogarithmic valency. Moreover it permits to reprove amenability of automata groups with bounded activity by methods different from \cite{BKN}. 

The article is structured as follows. Rooted trees and their automorphism groups are described in section \ref{rootree}. Section \ref{first} is devoted to the construction of explicit Folner sets for the archetypal example of the alternate mother group $G_d$ acting on a regular rooted tree of valency $d\geq 5$. This example, treated first for simplicity of notations, is generalized to groups with property $\Dc\Pc$ in section \ref{gen}. Finally, section \ref{examples} is devoted to the construction of groups satisfying $\Dc\Pc$, including the saturated alternate directed groups, and some groups acting on trees with unbounded valency.

\section{Rooted trees and their groups of automorphisms} \label{rootree}

Let $S_d$ denote the group of permutations of the set $\{1,\dots,d\}$ with $d$ elements, and  $\Ac_d=\Ac_{\{1,\dots,d\}}$ denote the subgroup of alternate permutations. 

Given a sequence $\bar d=(d_k)_{k\geq 0}$ of integers $\geq 2$, the spherically homogeneous rooted tree $T_{\bar d}$ is the graph with vertex set $\{t_0t_1\dots t_k|t_i \in \{1,\dots,d_i\},k \geq -1\} $, including the empty sequence $\emptyset$, called the root, corresponding to $k=-1$, and edge set $\{(t_0\dots t_k,t_0 \dots t_k t_{k+1})\}$. The vertex set restricted to a fixed $k$ is called the $(k+1)$st level of the tree. It is the direct product $\{1,\dots,d_0\} \times \dots \times \{1,\dots,d_k\}$. When the sequence $\bar d$ is constant equal to $d$, the tree is called $d$-regular, denoted $T_d$.

The group of automorphisms $Aut(T_{\bar d})$ of the rooted tree $T_{\bar d}$ is the group of graph automorphims that fix the root $\emptyset$. It satisfies a canonical isomorphism:
\begin{eqnarray}\label{iso}\f: Aut(T_{\bar d}) \xrightarrow{\simeq} Aut(T_{s \bar d}) \wr S_{d_0}, \end{eqnarray}
where $
s \bar d=(d_k)_{k\geq 1}$ is the shifted sequence obtained by deleting the first entry, and $G\wr S_d=(G\times \dots \times G) \rtimes S_d$ is the semi-direct product where $S_d$ acts by permuting factors, called wreath product. Since $\f$ is canonical, we identify $g$ and $\f(g)$ and write $g=(g_1,\dots,g_{d_0})\s=(g_{t_0})\s$. The product rule is $gg'=(g_1g'_{\s(1)},\dots,g_{d_0}g'_{\s(d_0)})\s\s'$, where $g$ is applied before $g'$.

By iterating the wreath product ismorphism (\ref{iso}), a family of canonical isomorphisms is obtained:
\begin{eqnarray} \label{isok} Aut(T_{\bar d}) \simeq Aut(T_{s^k \bar d}) \wr S_{d_{k-1}} \wr \dots \wr S_{d_0}.\end{eqnarray}
Identifications are denoted $g=(g_{t_0\dots t_{k}})(\s_{t_0\dots t_{k-1}})\dots(\s_{t_0})\s$, where $(\s_{t_0\dots t_j})$ is  a sequence of permutations in $S_{d_j}$ indexed by the $(j+1)$st level of the tree and $(g_{t_0\dots t_{k}})$ is a sequence of automorphisms of $T_{\s^k \bar d}$ indexed by level $k+1$. The automorphism $g$ is determined by the whole sequence of permutations $(\s_v)_{v \in T_{\bar d}}$, called its portrait. 

The automorphism $g$ is said to be alternate if all the permutations $\s_v$ of its portrait are alternate permutations. Denote $Aut^{alt}(T_{\bar d})$ the group of alternate automorphisms of $T_{\bar d}$. It also satisfies canonical isomorphisms:
$$Aut^{alt}(T_{\bar d}) \simeq Aut^{alt}(T_{s^k \bar d}) \wr \Ac_{d_{k-1}} \wr \dots \wr \Ac_{d_0}. $$

The neutral element of a group $G$ is denoted $e_G$ or $e$. 

\section{Folner sets of the alternate mother group}\label{first}

\subsection{The alternate mother group} In the case of a $d$-regular rooted tree $T_d$, the canonical wreath product isomorphism of the group of alternate automorphisms has the form:
\begin{eqnarray}\label{isod}\f:Aut^{alt}(T_d) \xrightarrow{\simeq} Aut^{alt}(T_d) \wr \Ac_d. \end{eqnarray}
It permits to define recursively some alternate automorphisms of $T_d$ as follows.

\begin{itemize}
\renewcommand{\labelitemi}{$\bullet$}

\item Given $\s$ in $\Ac_d$, denote $A=\{ a(\s)|\s \in \Ac_d\}\simeq \Ac_d$ with:
$$\f(a(\s))=(e,\dots,e)\s. $$
The elements $a=a(\s)$ of $A$ are alternate automorphisms of $T_d$, called rooted automorphisms, because the portrait of $a(\s)$ is given by $\s_\emptyset=\s$ and $\s_v=e$ for $v \neq \emptyset$.

\item Given $a_2,\dots,a_d$ in $\Ac_d$ and $\r$ in $Fix_{\Ac_d}(1)=\Ac_{\{2,\dots,d\}}=\Ac_{d-1}$,  the alternate automorphism $b=b(a_2,\dots,a_d,\r)$ satisfies under the wreath product isomorphism:
\begin{eqnarray*}\f(b(a_2,\dots,a_d,\r))=(b(a_2,\dots,a_d,\r),a_2,\dots,a_d)\r. \end{eqnarray*}

This defines recursively a tree automorphism $b=b(a_2,\dots,a_d,\r)$  with portrait the family of permutations $(\s_v)_{v \in T_{d}}$ given by $\s_{1\dots 1 1}=\r$, $\s_{1\dots 1t}=a_t$ for $2 \leq t \leq d$ and $\s_v=e$ for the other vertices $v$.

Denote $B=\{b(a_2,\dots,a_d,\r)|a_2,\dots,a_d \in \Ac_d,\r \in Fix_{\Ac_d}(1)\}$. The elements of $B$ are called directed. The set $B$ forms a finite subgroup of $Aut(T_d)$. Indeed, the following is an isomorphism:
\begin{eqnarray}\label{Bfinite}\begin{array}{rll} B & \rightarrow  & (\Ac_d \times \dots \times \Ac_d) \rtimes \Ac_{\{2,\dots,d\}} \\ b(a_2,\dots,a_d,\r) & \mapsto & (a_2,\dots,a_d)\r. \end{array}. \end{eqnarray}

\item The alternate mother group $G_d$ is the subgroup of alternate automorphisms of $T_d$ generated by the sets $A,B$:
$$G_d=\langle A,B \rangle <Aut^{alt}(T_d). $$
\end{itemize}

By construction, the group $G_d$ is an automata group. It is essentially the mother group of degree 0 (see \cite{BKN}, \cite{AAV}), but the permutations involved are alternate. Since $\Ac_d$ is simple hence perfect for $d\geq 5$, the group $G_d$ satisfies the:

\begin{proposition}\label{wr}
If $d\geq 5$, the canonical isomorphism (\ref{isod}) induces an isomorphism:
$$\f: G_d \xrightarrow{\simeq} G_d \wr \Ac_d. $$
\end{proposition}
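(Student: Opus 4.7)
Since the map $\f$ in (\ref{isod}) is already an isomorphism of $Aut^{alt}(T_d)$, its restriction to $G_d$ is automatically injective, so I only need to prove the equality $\f(G_d) = G_d \wr \Ac_d$. The inclusion $\f(G_d) \subseteq G_d \wr \Ac_d$ is immediate on the generators of $G_d$: $\f(a(\s)) = (e, \dots, e) \s$ has $\s \in \Ac_d$, while $\f(b) = (b, a_2, \dots, a_d)\r$ has entries in $B \cup A \subseteq G_d$ and $\r \in \Ac_{\{2,\dots,d\}} \subseteq \Ac_d$. Since $\f(A)$ already surjects onto the top factor $\Ac_d$, the whole problem reduces to proving that $N := \f(G_d) \cap (G_d \times \dots \times G_d)$ coincides with the full base $G_d^d$.

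\textbf{Initial elements and a key commutator.} For any choice of parameters, the identity $\f(b \cdot a(\r)^{-1}) = (b, a_2, \dots, a_d) \cdot e$ exhibits such an element in $N$. Since $N$ is normal in $\f(G_d)$, it is in particular stable under conjugation by $\f(A)$, which acts on the base by coordinate permutation through $\Ac_d$. I will then compute $[\f(b), \f(a(\r))]$: the top permutations are both $\r$ and cancel, so the commutator lies in $N$ and a direct calculation gives $(e, a_2 a_{\r(2)}^{-1}, \dots, a_d a_{\r(d)}^{-1})$ for any $\r \in \Ac_{\{2,\dots,d\}}$. Taking $\r$ to be a $3$-cycle $(i\,j\,k) \subset \Ac_{\{2,\dots,d\}}$ and setting $a_i = \alpha \in \Ac_d$ with $a_l = e$ otherwise produces an element of $N$ equal to $\alpha$ in slot $i$, $\alpha^{-1}$ in slot $k$, and $e$ elsewhere.

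\textbf{Conclusion via perfectness of $\Ac_d$.} After permuting coordinates by $\Ac_d$, both $(\alpha, e, \alpha^{-1}, e, \dots, e)$ and $(\beta, \beta^{-1}, e, \dots, e)$ lie in $N$. Their commutator has trivial top, hence is taken coordinatewise, and reduces to $([\alpha, \beta], e, e, \dots, e) \in N$. This is the step where $d \geq 5$ is essential: $\Ac_d$ is then non-abelian simple and therefore perfect, so every $\g \in \Ac_d$ is a product of commutators in $\Ac_d$ and hence $(\g, e, \dots, e) \in N$ for every $\g \in A$. By coordinate permutation, $A$ sits in every coordinate of $N$. For the generators in $B$, multiplying the initial $(b, a_2, \dots, a_d) \in N$ by the single-coordinate elements $(e, \dots, a_j^{-1}, \dots, e) \in N$ for $j = 2, \dots, d$ produces $(b, e, \dots, e) \in N$ for every $b \in B$, so $B$ also lies in each coordinate. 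Since $G_d = \langle A, B \rangle$, this yields $G_d \times \{e\}^{d-1} \subseteq N$, and coordinate permutation gives $N = G_d^d$.

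\textbf{Main obstacle.} The delicate part is the commutator bookkeeping in the middle paragraph, together with checking that everything still works at the boundary $d = 5$: the argument only needs $3$-cycles in $\Ac_{\{2,\dots,d\}}$ (which exist from $d \geq 4$ on) and perfectness of $\Ac_d$ itself, but not perfectness of $\Ac_{\{2,\dots,d\}} = \Ac_{d-1}$, which would fail precisely at $d = 5$.
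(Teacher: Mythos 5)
Your proof is correct, and it reaches the base group $G_d^d$ by a genuinely different route from the paper's, although both rest on the same two ingredients: perfectness of $\Ac_d$ and transitivity of the $\Ac_d$-action on coordinates. The paper (Fact \ref{perfect1}) first places the \emph{directed} generators in a single coordinate: it commutes two directed generators, one conjugated by a rooted $\t$ with $\t(1)=1$ and $\t^{-1}(2)=3$, so that $\f([b,b'^{\t}])=([b,b'],e,\dots,e)$, then uses the identity $[b(\a_2,e,\dots,e),b(\a_2',e,\dots,e)]=b([\a_2,\a_2'],e,\dots,e)$, perfectness of $\Ac_d$, and the semidirect-product structure (\ref{Bfinite}) to conclude that the elements obtained generate all of $B$ in the first coordinate; the rooted elements $(a_2,e,\dots,e)$ are extracted only at the very end from $(b_2,a_2,e,\dots,e)$. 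You proceed in the opposite order: your commutator of a directed generator with the rooted generator carrying the same top permutation $\r$ lands in the base with the pattern $\a$ in one slot and $\a^{-1}$ in another, a second commutator of two such (permuted) elements isolates $([\a,\b],e,\dots,e)$, perfectness of $\Ac_d$ then yields all rooted elements in single coordinates, and the directed generators are recovered last by stripping $\f(b\,a(\r)^{-1})=(b,a_2,\dots,a_d)$ down to $(b,e,\dots,e)$. Your computations check out (including the boundary case $d=5$, where indeed only perfectness of $\Ac_d$, never of $\Ac_{d-1}$, is used — the same is true of the paper's argument). Your version has the small advantage of not needing the isomorphism (\ref{Bfinite}) to identify a generating pair of $B$; the paper's version transfers more directly to the saturated directed groups of Proposition \ref{dpbounded}, where the generating set need not contain elements $b(e,\dots,\a,\dots,e,\r)$ with $\a$ and the $3$-cycle $\r$ chosen independently, as your argument requires.
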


This isomorphism will also be considered canonical $ G_d \simeq G_d \wr \Ac_d$, and the elements $g$ and $\f(g)$ will be identified in the remainder of this section. The proposition follows from the:
 
\begin{fact}\label{perfect1}
Let $d \geq 5$, then for any generator $a=a(\s) \in A$ and $b=b(a_2,\dots,a_d,\r) \in B$, the elements $(a,e,\dots,e)$ and $(b,e,\dots,e)$ belong to $\f(G_d)$.
\end{fact}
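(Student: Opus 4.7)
The plan is to derive both parts of the fact from a single construction exploiting perfectness of $\Ac_d$ for $d\geq 5$. Write $H=\f(G_d)\subseteq G_d\wr \Ac_d$; we must show $(a,e,\dots,e)\in H$ for every $a\in A$ and $(b,e,\dots,e)\in H$ for every $b\in B$.

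I would first establish the $A$-statement. The key construction is to build, for each $\s'\in \Ac_d$, the element
\[ w(\s'):=(a(\s'),\,e,\,e,\,a((\s')^{-1}),\,e,\dots,e)\in H. \]
Let $b=b(\s',e,\dots,e,e)\in B$, so $\f(b)=(b,a(\s'),e,\dots,e)$. A direct application of the wreath-product product rule shows that conjugating $b$ in $G_d$ by the rooted element $a(\s)$ with $\s=(1,2,3)\in \Ac_d$ yields an element whose image is $(a(\s'),e,b,e,\dots,e)\in H$, while conjugating $\f(b)$ in the wreath product by $\f(a(\t))$ with $\t=(1,3)(2,4)\in \Ac_d$ yields $(e,e,b,a(\s'),e,\dots,e)\in H$. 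Multiplying the first element by the inverse of the second cancels $b$ at coordinate $3$ and produces $w(\s')$.

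The heart of the argument is a short wreath-product calculation giving the commutator-type identity
\[ w(\s')\,w(\t')\,w(\s'\t')^{-1}=(e,\,e,\,e,\,a([(\s')^{-1},(\t')^{-1}]),\,e,\dots,e), \]
which shows that for every pair $\s',\t'\in \Ac_d$ the element $(e,e,e,a(\o),e,\dots,e)$ is in $H$ when $\o=[(\s')^{-1},(\t')^{-1}]$ is a single commutator in $\Ac_d$. Since $\Ac_d$ is perfect for $d\geq 5$, every $\o\in \Ac_d$ is a product of such commutators, hence $(e,e,e,a(\o),e,\dots,e)\in H$ for all $\o\in \Ac_d$. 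Conjugating inside $H$ by a rooted $3$-cycle of $\Ac_d$ that sends coordinate $4$ to coordinate $1$ then yields $(a(\o),e,\dots,e)\in H$ for all $\o\in \Ac_d$, finishing the $A$-part.

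The $B$-part is then immediate. For $b=b(a_2,\dots,a_d,\r)$, the element $\f(b)\,\f(a(\r^{-1}))=(b,a_2,\dots,a_d)$ lies in $H$ with trivial top permutation. By the $A$-part, $(a(a_i^{-1}),e,\dots,e)\in H$ for each $i$, and conjugating by a rooted even permutation moves this to coordinate $i$. Multiplying $(b,a_2,\dots,a_d)$ successively by these elements for $i=2,\dots,d$ cancels each $a_i$ and leaves $(b,e,\dots,e)\in H$. The main obstacle is constructing $w(\s')$: it requires careful bookkeeping of the wreath-product multiplications to verify the positions of $b$ and $a(\s')$ after each conjugation, and the specific choice $\t=(1,3)(2,4)$ is what makes the two elements share position $3$ for $b$ so that the cancellation succeeds and the perfectness of $\Ac_d$ can then be applied at a single coordinate.
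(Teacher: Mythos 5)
Your proof is correct, but it runs in the opposite direction from the paper's. The paper establishes the $B$-part first: it takes the commutator of $\f(b(\a_2,e,\dots,e,e))$ with a conjugate $\f(b'^{\t})$ chosen so that the rooted coordinates disjointly cancel while the first coordinates, both directed, produce $([b,b'],e,\dots,e)=(b([\a_2,\a_2'],e,\dots,e,e),e,\dots,e)$; perfectness of $\Ac_d$ together with the isomorphism (\ref{Bfinite}) (the elements $b_2$ and $b_\emptyset$ generate the finite group $B$) then yields all $(b,e,\dots,e)$, from which the $A$-part is extracted at the end. You instead arrange the two conjugates of the directed generator so that the \emph{directed} coordinates coincide at position $3$ and cancel, leaving the purely rooted element $w(\s')=(a(\s'),e,e,a((\s')^{-1}),e,\dots,e)$; your identity $w(\s')w(\t')w(\s'\t')^{-1}=(e,e,e,a([(\s')^{-1},(\t')^{-1}]),e,\dots,e)$ (which checks out, since all top permutations are trivial and the first coordinate telescopes) then isolates perfectness of $\Ac_d$ at a single coordinate, giving the $A$-part first and the $B$-part as an easy corollary. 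Your route has the virtue of never invoking the internal structure of $B$ as a finite group generated by the $b_2$'s and $b_\emptyset$'s, needing only perfectness of $\Ac_d$ and the existence of the even permutations $(1,2,3)$ and $(1,3)(2,4)$ (available for $d\geq 4$, so certainly for $d\geq 5$); the paper's route is the one that transports directly to the generalization in Proposition \ref{dpbounded}, where the analogues $h(2)$, $h(\emptyset)$ of $b_2$, $b_\emptyset$ and the saturation hypothesis play exactly the role that isomorphism (\ref{Bfinite}) plays here, so if you intend your argument to cover that case as well you would need to re-introduce an analogous generation statement for $H$ at that point.
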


Recall the conjugacy notation $g^a=aga^{-1}$, and observe that for $g=(g_1,\dots,g_d)\s$ and $a$ in $A$, one has $g^a=(g_{a(1)},\dots,g_{a(d)})\s^a$.

\begin{proof}[Proof of Fact \ref{perfect1}]
Take $\t$ in $\Ac_d$ such that $\t(1)=1$ and $\t^{-1}(2)=3$ and observe the commutator relations:
\begin{eqnarray*}
\f(b)&=&\f(b(\a_2,e_A,\dots,e_A,e_A))= (b,\a_2,e_G,e_G,\dots,e_G)e_A, \\
\f(b'^{\t})&=&\f(b(\a'_2,e_A,\dots,e_A,e_A)^{\t}) = (b',e_G,\a'_2,e_G,\dots,e_G)e_A, \\
~\f([b,b'^{\t}]) &=& ([b,b'],e_G,e_G,e_G,\dots,e_G). 
\end{eqnarray*}
As $[b(\a_2,e_A,\dots,e_A,e_A),b(\a_2',e_A,\dots,e_A,e_A)]=b([\a_2,\a_2'],e_A,\dots,e_A,e_A)$ and as the group $A\simeq\Ac_d$ is perfect (because it is simple), any element $a_2$ in $A\simeq \Ac_{d}$ is a product of commutators. This shows that $\f(G_d)$ contains $(b_2,e_G,\dots,e_G)$ for any $b_2=b(a_2,e_A,\dots,e_A,e_A)$ with $a_2$ in $\Ac_d$. Moreover for any $b_\emptyset=b(e_A,\dots,e_A,\r)$ with $\r$ in $Fix_A(1)\simeq \Ac_{d-1}$, the group $\f(G_d)$ contains $\f(b_\emptyset a(\r^{-1}))=(b_\emptyset,e_G,\dots,e_G)$. 

Now the elements $b_2=b(a_2,e_A,\dots,e_A,e_A)$ and $b_\emptyset=b(e_A,\dots,e_A,\r)$ generate $B$ by isomorphism (\ref{Bfinite}), because $\r$ in $\Ac_{\{2,\dots,d\}}$ and $(a_2,e_A,\dots,e_A)$ for $a_2$ in $\Ac_d$ generate the finite group $(\Ac_d \times \dots \times \Ac_d) \rtimes \Ac_{\{2,\dots,d\}}$. Thus $\f(G_d)$ contains $(b,e_G,\dots,e_G)$ for any $b$ in $B$.

Finally given $a_2$ in $A$, for $b_2=b(a_2,e_A,\dots,e_A,e_A)=(b_2,a_2,e_G,\dots,e_G)$, the element $(b_2^{-1},e_G,\dots,e_G)$ belongs to $\f(G_d)$ by the above. So do $(b_2^{-1},e_G,\dots,e_G)\f(b_2)=(e_G,a_2,e_G,\dots,e_G)$ and $(e_G,a_2,e_G,\dots,e_G)^\t=(a_2,e_G,\dots,e_G)$ for $\t$ in $\f(A) \simeq A \simeq \Ac_d$ such that $\t^{-1}(2)=1$.
\end{proof}
 
\begin{proof}[Proof of Proposition \ref{wr}]
By definition of the generators of $G_d$, the morphism $\f$ is an embedding into the wreath product $G_d \wr \Ac_d$. The key point is that this embedding is surjective. Clearly $\f(A) \simeq A \simeq \Ac_d$ is the set of rooted automorphisms. Moreover, Fact \ref{perfect1} shows that $G_d \times \{e\} \times \dots \times \{e\}$ is in $\f(G_d)$. As $\Ac_d$ acts transitively on $\{1,\dots,d\}$, conjugation shows that $\{e\}\times \dots \times G_d \times \dots \times \{e\}$ also belongs to $\f(G_d)$ for any position of the non-trivial factor. Then $G_d \times \dots \times G_d$ belongs to $\f(G_d)$ by product. This proves the wreath product isomorphism.
\end{proof}

\subsection{Definition of Folner sets}

For a group $\G$ with finite generating set $S$, the boundary of a subset $L \subset \G$ is defined as:
$$\partial_S L=\{\g \in L| \exists s \in S, \g s \notin L\}. $$
The interior of $L$ is the set $Int_S(L)=L\setminus \partial_S L$.

A sequence $L_k$ of subsets of $\G$ is a Folner sequence if $\frac{|\partial L_k|}{|L_k|}\rightarrow 0$. By \cite{Fol}, a finitely generated group $\G$ is amenable if and only if it admits a Folner sequence for some (equivalently for any) finite generating set $S$. For the remainder of this section, the set $S=A \cup B$ is considered the canonical generating set of $G_d$ and the notations $\partial L$ and $Int(L)$ stand for $\partial_{A\cup B}L$ and $Int_{A\cup B}$ respectively.

Let us define a sequence of subsets of $G_d$ as follows:
$$L_0=\{g \in G_d|\exists \b \in B, \a_2,\dots,\a_d,\s \in A, g=(\b,\a_2,\dots,\a_d)\s\}.$$
By induction on $k$, define:
$$L_{k+1}=\left\{g\in G_d \left| \begin{array}{r} g=(g_1,\dots,g_d)\s , \textrm{ such that }  \s \in A,\forall t \in \{1,\dots,d \}, g_t \in L_k, \\  \textrm{ and } \exists T\in \{1,\dots,d \},g_T \in Int(L_k)\end{array}\right.\right\}. $$
By Proposition \ref{wr}, the sets $L_k$ are included in $G_d$ for $d\geq 5$, and not just in the automorphism group $Aut(T_d)$.

\begin{theorem}\label{deg0}
For $d\geq 5$, the sets $L_k$ form a Folner sequence for $G_d$. In particular, the group $G_d$ is amenable.
\end{theorem}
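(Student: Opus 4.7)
The plan is to derive an \emph{exact} recursion for $\e_k := |\partial L_k|/|L_k|$ from the recursive definition of $L_k$, and then show that this recursion forces $\e_k \to 0$, which is precisely the Folner condition for the generating set $S = A \cup B$.

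Two structural observations do all the work. First, right-multiplication by any $a = (e,\dots,e)\t \in A$ sends $(g_1,\dots,g_d)\s$ to $(g_1,\dots,g_d)\s\t$, leaving every branch $g_t$ untouched; by induction on $k$ this gives $L_k\cdot A = L_k$, so $\partial L_k$ depends only on the $B$-part of $S$, and for $g \in L_k$, $g \in Int(L_k)$ if and only if $gb' \in L_k$ for every $b' \in B$. Second, $B$ is a finite subgroup by the isomorphism (\ref{Bfinite}), so $bB = B$ for every $b \in B$; the condition ``$gb' \in L_k$ for every $b' \in B$'' is therefore preserved under right-multiplication of $g$ by any $b \in B$, i.e.\ $Int(L_k)\cdot B = Int(L_k)$.

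From these one extracts a closed-form description of $Int(L_{k+1})$. Given $g = (g_1,\dots,g_d)\s \in L_{k+1}$, $b = (b,a_2,\dots,a_d)\r \in B$ and $t_0 = \s^{-1}(1)$, the branches of $gb$ are $(gb)_{t_0} = g_{t_0} b$ and $(gb)_t = g_t a_{\s(t)}$ for $t \neq t_0$; the latter always lie in $L_k$ by the first observation. Hence $g \in Int(L_{k+1})$ iff $g_{t_0} b \in L_k$ for every $b \in B$, iff $g_{t_0} \in Int(L_k)$; the interior branch required by the definition of $L_{k+1}$ is then supplied automatically by $(gb)_{t_0}$, which lies in $Int(L_k)$ by the second observation. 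Counting gives $|Int(L_{k+1})| = |A| \cdot |Int(L_k)| \cdot |L_k|^{d-1}$, and directly from the definition $|L_{k+1}| = |A|(|L_k|^d - |\partial L_k|^d)$; subtracting these yields $|\partial L_{k+1}| = |A|\cdot|\partial L_k|\cdot(|L_k|^{d-1} - |\partial L_k|^{d-1})$, hence the exact recursion
\[ \e_{k+1} = \frac{\e_k(1 - \e_k^{d-1})}{1 - \e_k^d}. \]

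The right-hand side is strictly less than $\e_k$ whenever $\e_k \in (0,1)$, since $(1-\e_k^{d-1})/(1-\e_k^d) < 1$; hence $(\e_k)$ is strictly decreasing, and any limit $\e_\infty$ satisfies $\e_\infty^{d-1} = \e_\infty^d$, forcing $\e_\infty \in \{0,1\}$. A direct count using $Int(L_0) = \{(\b,\a_2,\dots,\a_d)\s \in L_0 : \s(1) = 1\}$ gives $\e_0 = 1 - 1/d < 1$, excluding $\e_\infty = 1$ and forcing $\e_k \to 0$; thus $(L_k)$ is a Folner sequence and $G_d$ is amenable. The main obstacle is the closed-form description of $Int(L_{k+1})$: it is precisely the subgroup property of $B$ that makes $Int(L_k)$ stable under right-$B$-multiplication, which in turn makes the recursion \emph{exact}; without it, a union bound over $b \in B$ would introduce a factor $|B|$ far too large to close the induction.
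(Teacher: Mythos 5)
Your proof is correct and follows essentially the same route as the paper: the key step is the identification $Int(L_{k+1})=\{g\in L_{k+1}\,:\,g_{\s^{-1}(1)}\in Int(L_k)\}$ (the paper's Lemma \ref{lemmaLk}), from which the exact recursion $1-\d_{k+1}=\frac{1-\d_k}{1-\d_k^d}$ (Lemma \ref{delta}) and the monotone-limit argument with $\d_0=1-\frac1d$ follow identically. The only cosmetic differences are that you count $|L_{k+1}|$ and $|Int(L_{k+1})|$ by complements rather than via the binomial mean, and you justify $Int(L_k)\cdot B=Int(L_k)$ by the subgroup property of $B$ where the paper invokes the fixed-point condition $\r(1)=1$.
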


The group $G_d$ was known to be amenable by \cite{Bri1} (use of Kesten criterion on return probability) or \cite{BKN} (triviality of the Poisson boundary). However, these proofs, based on contraction in the wreath product of word length for some random walks, did not provide explicit Folner sets. The following proof uses neither random walks, nor word length.

\begin{remark}\label{isop} Estimation on the rate of convergence of $|\partial L_k|/|L_k|$ to zero and on the cardinality of $L_k$ will show that the Folner function $Fol(n)=\min\{|L| /n|\partial L|\leq |L| \}$ is bounded above by a function $\exp_C(\exp_C(n^{d-1+\e}))$ for some constant $C$ and any positive $\e$ where $\exp_C(x)=C^x$. However, the return probability of a symmetric simple random walk on $G_d$ is bounded below by $\exp(-n^{\b_d})$ for $\b_d=\log d/\log \frac{d^2}{d-1}$ by Theorem 6.1 in \cite{Bri3}. Combined with Corollary V.2 in \cite{Gr}, this shows that the function $Fol(n)$ cannot be bounded below by $\exp(n^\alpha)$ for $\a=2 \b_d/1-\b_d$. Thus the sequence $L_k$ exhibited here is far from optimal as a Folner sequence.
\end{remark}

\subsection{Proof of Theorem \ref{deg0}}

Observe that for any $a$ in $A$ and $g=(\b,\a_2,\dots,\a_d)\s$ in $L_0$, the element $ga=(\b,\a_2,\dots,\a_d)\s a$ still belongs to $L_0$. Moreover, for any $b=b(a_2,\dots,a_d,\r)=(b,a_2,\dots,a_d)\r$ in $B$, one has:
$$gb=\left\{ \begin{array}{ll} (\b b,\a_2 a_{\s(2)},\dots,\a_d a_{\s(d)})\s\r & \textrm{ if }\s^{-1}(1)= 1,  \\
(\b a_{\s(1)},\a_2 a_{\s(2)},\dots,\a_{\s^{-1}(1)}b,\dots,\a_d a_{\s(d)})\s\r & \textrm{ if }\s^{-1}(1) \neq 1.  \end{array} \right. $$
As the sets $A$ and $B$ are finite groups, this shows equivalence of (1), (2) and (3) in the:
\begin{fact}\label{factL0}
The following are equivalent:
\begin{enumerate}
\item $g$ belongs to $Int(L_0)$,
\item $gb \in L_0$ for all $b \in B$,
\item $\s^{-1}(1)=1$,
\item $gb \in Int(L_0)$ for all $b \in B$.
\end{enumerate}
In particular, $\frac{|Int(L_0)|}{|L_0|}=\frac{1}{d}$, hence $\d_0=\frac{|\partial L_0|}{|L_0|}=1-\frac{1}{d}$.
\end{fact}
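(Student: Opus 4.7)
The plan is to leverage the observations made immediately before the statement, which already yield most of the equivalences; the only new content is the implication (3) $\Rightarrow$ (4) and the cardinality computation. From the fact that $ga \in L_0$ for every $a \in A$ whenever $g \in L_0$, the condition $g \in Int(L_0)$ reduces to $gb \in L_0$ for all $b \in B$, which gives (1) $\Leftrightarrow$ (2). The first branch of the formula for $gb$ then gives (3) $\Rightarrow$ (2) directly, since $\b b \in B$ (as $B$ is a subgroup) and $\a_t a_{\s(t)} \in A$, so the product lies in $L_0$.

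For (2) $\Rightarrow$ (3) I proceed by contrapositive. Suppose $\s^{-1}(1) = k \neq 1$. In the second branch of the formula, the $k$-th coordinate of $gb$ is $\a_k b$, and for $gb$ to lie in $L_0$ this must belong to $A$, which forces $b \in A \cap B$. A direct comparison of portraits shows that $A \cap B = \{e\}$: elements of $A$ have trivial permutations outside the root, while non-trivial elements of $B$ have non-trivial permutations along the entire leftmost branch. Since non-trivial elements of $B$ certainly exist for $d \geq 5$, choosing such a $b$ produces $gb \notin L_0$, contradicting (2). This is the delicate step; the rest of the proof is bookkeeping.

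For (3) $\Rightarrow$ (4) I substitute $\s(1) = 1$ into the first branch of the formula: the root permutation of $gb$ is $\s\r$, and since $\r \in Fix_{\Ac_d}(1)$ and $\s$ fixes $1$, so does $\s\r$. Thus $gb$ itself satisfies (3), and the equivalences already established place $gb$ in $Int(L_0)$; the implication (4) $\Rightarrow$ (2) is trivial. Finally, uniqueness of the wreath product decomposition from Proposition~\ref{wr} provides a bijection between $L_0$ and $B \times A^{d-1} \times A$, and between $Int(L_0)$ and $B \times A^{d-1} \times Fix_A(1)$. Since $Fix_A(1) \simeq \Ac_{d-1}$ has index $d$ in $A \simeq \Ac_d$, the ratio $|Int(L_0)|/|L_0|$ equals $1/d$, whence $\d_0 = 1 - 1/d$.
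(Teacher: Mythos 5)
Your proof is correct and follows essentially the same route as the paper: the same case split on the displayed formula for $gb$ according to whether $\sigma^{-1}(1)=1$, the same use of $\rho(1)=1$ to get (3) $\Rightarrow$ (4), and the same count $|L_0|=|B||A|^d$ with the proportion $1/d$ coming from the condition $\sigma^{-1}(1)=1$. The one point where you go beyond the paper is in making (2) $\Rightarrow$ (3) explicit via $A\cap B=\{e\}$ (comparison of portraits), a step the paper compresses into the remark that $A$ and $B$ are finite groups; your version supplies the detail that is actually needed there, namely that a non-trivial $b\in B$ satisfies $\alpha_{\sigma^{-1}(1)}b\notin A$.
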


\begin{proof} Point (4) is equivalent to (3) due to the fixed point assumption $\r(1)=1$ in the definition of $B$, which guarantees that $(\s\r)^{-1}(1)=(\r^{-1}\s^{-1})(1)=\s^{-1}(\r^{-1}(1))=1$ when $\s^{-1}(1)=1$.

The evaluation of $\d_0$ is done by counting $|L_0|=|B||A|^d$ as $g$ is described by $\b,\a_2,\dots,\a_d,\s$, and condition $\s^{-1}(1)=1$ occurs with probability $\frac{1}{d}$.
\end{proof}

\begin{lemma}\label{lemmaLk}
Let $g \in L_k$, the following are equivalent:
\begin{enumerate}
\item $g$ belongs to $Int(L_k)$,
\item $gb \in L_k$ for all $b \in B$,
\item $\s^{-1}(1)\in I(g)=\{T|g_T \in Int(L_{k-1})\}$,
\item $gb \in Int(L_k)$ for all $b \in B$.
\end{enumerate}
\end{lemma}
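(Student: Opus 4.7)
The plan is to proceed by induction on $k$, with the base case $k=0$ supplied by Fact \ref{factL0}, and for the inductive step to prove the equivalences in the order $(1) \Leftrightarrow (2)$, $(2) \Leftrightarrow (3)$, and $(1) \Leftrightarrow (4)$. Throughout I would use the basic observation that any $a \in A$ has the form $(e,\dots,e)\t$, so $ga$ has the same coordinates as $g$ with only the top permutation altered; hence multiplication by $A$ preserves membership in $L_k$. This gives $(1) \Leftrightarrow (2)$ at level $k$ immediately: $g \in Int(L_k)$ iff $gs \in L_k$ for every $s \in A \cup B$ iff $gb \in L_k$ for every $b \in B$.

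For $(2) \Leftrightarrow (3)$, I would write $b = b(a_2,\dots,a_d,\r) = (b,a_2,\dots,a_d)\r$ and compute via the wreath product rule
\[
gb = (h_1,\dots,h_d)\,\s\r, \qquad h_T = g_T b, \qquad h_t = g_t a_{\s(t)} \ \text{ for } t \neq T,
\]
where $T = \s^{-1}(1)$. At positions $t \neq T$ the factor $a_{\s(t)} \in A$ preserves $L_{k-1}$-membership, so $h_t \in L_{k-1}$ automatically; hence $gb \in L_k$ is controlled by the single coordinate $h_T = g_T b$. The inductive $(1) \Leftrightarrow (2)$ at level $k-1$, applied to $g_T$, says that $g_T b \in L_{k-1}$ for every $b \in B$ is equivalent to $g_T \in Int(L_{k-1})$, i.e.\ to $\s^{-1}(1) \in I(g)$, which is (3). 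When this holds, the inductive $(1) \Leftrightarrow (4)$ additionally forces $g_T b \in Int(L_{k-1})$, so the interior-coordinate clause in the definition of $L_k$ is also automatically satisfied.

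For $(1) \Leftrightarrow (4)$ I would use that $B$ is a finite subgroup by isomorphism (\ref{Bfinite}). Fix any $b \in B$: the $(1) \Leftrightarrow (2)$ just proven at level $k$, applied to $gb$, yields $gb \in Int(L_k)$ iff $(gb)b' = g(bb') \in L_k$ for every $b' \in B$. Since $b' \mapsto bb'$ is a bijection of $B$, this rewrites as $gb'' \in L_k$ for every $b'' \in B$, i.e.\ as $g \in Int(L_k)$; hence $gb \in Int(L_k)$ iff $g \in Int(L_k)$ for each individual $b$, giving $(1) \Leftrightarrow (4)$. The main subtlety lies in $(2) \Leftrightarrow (3)$: the factor of $gb$ at position $T = \s^{-1}(1)$ is the full element $b$ itself rather than any of the free parameters $a_i,\r$ in its parametrization, so one must observe that as $b$ ranges over $B$ this factor likewise ranges over $B$, which is precisely what enables the application of the inductive hypothesis at level $k-1$.
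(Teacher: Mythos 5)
Your proof is correct, and for the equivalences $(1)\Leftrightarrow(2)\Leftrightarrow(3)$ it is essentially the paper's argument: induction on $k$ with Fact \ref{factL0} as base case, $(1)\Leftrightarrow(2)$ from the $A$-invariance of $L_k$, and $(2)\Leftrightarrow(3)$ by isolating the coordinate at $T=\s^{-1}(1)$ of $gb$ and invoking the inductive hypothesis (both $(1)\Leftrightarrow(2)$ and $(1)\Rightarrow(4)$ at level $k-1$, exactly as the paper does). Where you genuinely diverge is statement (4). The paper deduces $(3)\Rightarrow(4)$ from the just-established $(1)\Leftrightarrow(3)$ applied to $gb$, using the fixed-point condition $\r(1)=1$ to get $(\s\r)^{-1}(1)=\s^{-1}(1)$, so that $gb$ is interior for the same combinatorial reason that $g$ is. You instead exploit the fact that $B$ is a subgroup: $gb\in Int(L_k)$ iff $g(bb')\in L_k$ for all $b'$, and $b'\mapsto bb'$ permutes $B$. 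This is a clean, purely formal argument that bypasses $\r(1)=1$ at this step; its only (easily filled) gap is that one must first note $gb\in L_k$ before applying $(1)\Leftrightarrow(2)$ to $gb$, which holds under either hypothesis $(1)$ (via $(1)\Rightarrow(2)$) or $(4)$, so there is no circularity. The trade-off is that the paper's route records the explicit combinatorial criterion for $gb$ to be interior, namely $(\s\r)^{-1}(1)\in I(g)$, which is what feeds the description in Remark \ref{remcomb} and the counting argument of Lemma \ref{delta}; your version proves the equivalence but keeps that information implicit.
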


\begin{proof}[Proof of Lemma \ref{lemmaLk}]
The case $k=0$ is treated by Fact \ref{factL0} with convention that $I(g)=\{1\}$ if $g \in L_0$. Assume by induction that the result is true for $k-1$, and prove it for $k$. 

Again $ga=(g_1,\dots,g_d)\s a$ belongs to $L_k$ for any value of $a$ in $A$, $g$ in $L_k$. Moreover:
$$gb=(g_1a_{\s(1)},\dots,g_{\s^{-1}(1)}b,\dots,g_d a_{\s(d)})\s\r. $$

Suppose (3) holds true, that is $g_{\s^{-1}(1)}\in  Int(L_{k-1})$, then as (1) implies (4) for $k-1$, the element $g_{\s^{-1}(1)}b$ belongs to $Int(L_{k-1})$ for any $b$ in $B$, so that $gb$ belongs to $L_k$ for any $b$ in $B$, proving (2). Then (1) follows because $ga$ also belongs to $L_k$ for $a$ in $A$, hence $g$ is an interior point of $L_k$.

Suppose (3) does not hold, so $g_{\s^{-1}(1)}\in  \partial L_{k-1}$. By equivalence of (1) and (2) for $k-1$, there exists $b$ in $B$ such that $g_{\s^{-1}(1)}b\notin L_{k-1}$, so that $gb$ is not in $L_k$, disclaiming (1) and (2) for $g$. This proves equivalence of (1), (2) and (3) for $k$.

Now $gb$ belongs to $Int(L_k)$ if and only if $(\s\r)^{-1}(1) \in I(g)$ by equivalence of (1) and (3). But $(\s\r)^{-1}(1)=\s^{-1}(\r^{-1}(1))=\s^{-1}(1)$ because $\r(1)=1$. So (3) implies (4). Obviously, (4) implies (2), closing step $k$ of induction.
\end{proof}

There remains to evaluate the sizes of the interior and boundary of $L_k$. Set:
$$\d_k=\frac{|\partial L_k|}{|L_k|}, \quad 1-\d_k=\frac{|Int(L_k)|}{|L_k|}. $$

\begin{lemma}\label{delta}The sequence $(\d_k)$ satisfies:
$$1-\d_{k+1}=\frac{1-\d_k}{1-\d_k^d}. $$
\end{lemma}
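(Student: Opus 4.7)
The plan is a direct counting argument, made possible by the fact that Proposition \ref{wr} provides a bijective description $g = (g_1,\dots,g_d)\s$ with $(g_1,\dots,g_d,\s) \in G_d^d \times \Ac_d$, so cardinalities of subsets of $G_d$ can be computed coordinate by coordinate in the wreath product.

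First I would count $|L_{k+1}|$ from the recursive definition. An element $g = (g_1,\dots,g_d)\s$ lies in $L_{k+1}$ exactly when $\s \in A$, every $g_t \in L_k$, and at least one $g_T \in Int(L_k)$. Taking the complementary condition "every $g_t \in \partial L_k$'' and subtracting, one obtains
\begin{equation*}
|L_{k+1}| = |A|\bigl(|L_k|^d - |\partial L_k|^d\bigr).
\end{equation*}

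Next I would compute $|Int(L_{k+1})|$ using Lemma \ref{lemmaLk}, which characterises the interior as those $g \in L_{k+1}$ satisfying $g_{\s^{-1}(1)} \in Int(L_k)$. The point to observe is that this condition alone already forces $I(g) \neq \emptyset$, so the "at least one coordinate in $Int(L_k)$'' clause from the definition of $L_{k+1}$ becomes automatic. The free data are therefore $\s \in A$, one distinguished coordinate $g_{\s^{-1}(1)} \in Int(L_k)$, and the remaining $d-1$ coordinates freely in $L_k$, giving
\begin{equation*}
|Int(L_{k+1})| = |A|\cdot|Int(L_k)|\cdot|L_k|^{d-1}.
\end{equation*}

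Dividing these two expressions by $|A|\cdot|L_k|^d$ yields
\begin{equation*}
1 - \d_{k+1} = \frac{|Int(L_{k+1})|}{|L_{k+1}|} = \frac{(1-\d_k)}{1-\d_k^d},
\end{equation*}
which is the claimed recursion. There is essentially no obstacle: the only subtle point is the observation that the interior condition $g_{\s^{-1}(1)} \in Int(L_k)$ subsumes the nontriviality condition in the definition of $L_{k+1}$, which is what allows the $d-1$ remaining coordinates to be counted without any restriction beyond belonging to $L_k$.
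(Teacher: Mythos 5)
Your argument is correct, and it reaches the two key counts $|L_{k+1}|=|A|\bigl(|L_k|^d-|\partial L_k|^d\bigr)$ and $|Int(L_{k+1})|=|A|\,|Int(L_k)|\,|L_k|^{d-1}$ by a slightly different and more economical route than the paper. The paper partitions $L_{k+1}$ into the pieces $J_I$ indexed by the set $I$ of coordinates lying in $Int(L_k)$, computes $|J_I\cap Int(L_{k+1})|=\frac{|I|}{d}|J_I|$ using the transitivity of $\Ac_d$, and then sums over $I$ via the mean of the binomial distribution to recover $1-\d_k$. You instead fix $\s$ first, observe that the interior condition of Lemma \ref{lemmaLk} pins down exactly the coordinate in position $\s^{-1}(1)$ and (as you correctly note) automatically implies the nonemptiness clause in the definition of $L_{k+1}$, so the remaining $d-1$ coordinates range freely over $L_k$; this eliminates both the partition and the binomial identity. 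Both proofs rest on the same two inputs, namely the bijective wreath-product coordinates from Proposition \ref{wr} and the characterisation of $Int(L_{k+1})$ in Lemma \ref{lemmaLk}(3), so the mathematical content is the same; your version buys brevity, while the paper's decomposition into the $J_I$ is the one that generalises verbatim to the sets $\i L_k^K(\O)$ in Lemma \ref{receps}, where the same bookkeeping is reused.
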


\begin{proof}[Proof of Lemma \ref{delta}]
Given a subset $I \subset \{1,\dots,d\}$, denote:
$$J_I=\{g=(g_1,\dots,g_d)\s|\forall T \in I,g_T \in Int(L_k) \textrm{ and }\forall t \notin I, g_t \in \partial L_k \}. $$
By definition, $L_{k+1}$ is the disjoint union $L_{k+1}=\sqcup_{|I| \geq 1}J_I$.

For $i=|I|$, the size of $J_I$ and its intersection with $Int(L_{k+1})$ are evaluated as:
\begin{eqnarray*}
|J_I| &=& |\Ac_d||Int(L_k)|^i|\partial L_k|^{d-i}=|\Ac_d||L_k|^d(1-\d_k)^i\d_k^{d-i}, \\
|J_I\cap Int(L_{k+1})| &=& \frac{|I|}{d} |\Ac_d||Int(L_k)|^i|\partial L_k|^{d-i}=\frac{i}{d}|J_I|,
\end{eqnarray*}
where the factor $\frac{i}{d}$ comes from (3) of Lemma \ref{lemmaLk}. Denote $C_d^i$ the number of subsets of size $i$ in $\{1,\dots,d\}$, and use the mean of binomial distribution to get:
\begin{eqnarray*}
|Int(L_{k+1})|&=& \sum_{i=1}^d C_d^i (1-\d_k)^i\d_k^{d-i}\frac{i}{d}|L_k|^d|\Ac_d|=(1-\d_k)|L_k|^d|\Ac_d|, \\
|L_{k+1}| &=& \sum_{i=1}^d C_d^i (1-\d_k)^i\d_k^{d-i}|L_k|^d|\Ac_d|=(1-\d_k^d)|L_k|^d|\Ac_d|.
\end{eqnarray*}
This shows that:
$$1-\d_{k+1}=\frac{|Int(L_{k+1})|}{|L_{k+1}|}=\frac{1-\d_k}{1-\d_k^d}. $$
\end{proof}

\begin{proof}[Proof of Theorem \ref{deg0}]
As $\d_k>0$, Lemma \ref{delta} implies $1-\d_{k+1}>1-\d_k$, so the sequence $(\d_k)$ is decreasing, tending to a limit $\d$ satisfying $1-\d=\frac{1-\d}{1-\d^d}$, hence $\d$ is $0$ (or $1$, ruled out by $\d_0<1$).
\end{proof}

More precisely, Lemma \ref{delta} implies that for any $\eta <\frac{1}{d-1}$, one has $\d_k =O(k^{-\eta})$, as shown below in Lemma \ref{epsilon}. On the other hand: 
$$|L_k| =|B|^{d^k}|A|^{(d-1)d^k+(d^k+\dots+d+1)} \geq 2^{2^k}.$$
This provides the estimate on the Folner function in remark \ref{isop}.

\begin{remark}\label{remcomb}
Lemma \ref{lemmaLk} provides a complete combinatorial description of $L_k$. An element $g$ of $G_d$ has the form $g=(g_{t_0\dots t_{k}})(\s_{t_0\dots t_{k-1}})\dots(\s_{t_0})\s$ in the $k$th iteration of the wreath product. Such an element $g$ belongs to $L_k$ if and only if it satisfies the three following conditions:
\begin{enumerate}
\item $\forall t_0\dots t_{k-1}$, the element $g_{t_0\dots t_{k-1}1}$ is in $B$ and $g_{t_0\dots t_{k-1}2},\dots,g_{t_0\dots t_{k-1}d}$ are in $A$,
\item $\forall t_0\dots t_{k-2}$, the set $I(t_0\dots t_{k-2})=\{T_{k-1}| \s_{t_0\dots t_{k-2}T_{k-1}}^{-1}(1)=1\}$
is non-empty.
\item $\forall 3 \leq l \leq k+1, \forall t_0 \dots t_{k-l}$, the set 
$$I(t_0\dots t_{k-l})=\{T_{k-l+1}| \s_{t_1\dots t_{k-l}T_{k-l+1}}^{-1}(1) \in I(t_1\dots t_{k-l}T_{k-l+1}) \},$$ defined by induction on $l$, is non-empty (for $l=k+1$, consider $I(\emptyset)$ where $\emptyset$ is the root vertex of $T_d$). 
\newline 
\newline
The element $g$ belongs to $Int(L_k)$ if and only if it satisfies (1), (2), (3) and moreover:
\item $\s^{-1}(1) \in I(\emptyset)=\{T|\s_{T} \in I(T)\}$.
\end{enumerate}
Note that condition (2) is a specific case of condition (3) where $I(t_0\dots t_{k-1})=\{1\}$ for all $t_0\dots t_{k}$.
As an interpretation, say a vertex $v=t_0\dots t_l$ with $l\leq k-1$ is open if $\s_v^{-1}(1) \in I(v)$. Conditions (1), (2), (3) ensure that $g$ belongs to $L_k$ if and only if each vertex $v$ has at least one neighbour of next level $vT$ which is open. Condition (4) ensures that $g$ is in the interior $Int(L_k)$ if and only if the root itself is open.
\end{remark}

\section{Generalization}\label{gen}

\subsection{Property $\Dc\Pc$} Theorem \ref{deg0} can be generalized to the following wider setting. 

\begin{definition}\label{defdp}
A sequence of groups  is said to have property $\Dc\Pc$ if it satisfies the two following conditions for all $i$ in $\N$:
\begin{enumerate}
\item the group $\G_i$ contains two subgroups $A_i$ and $H_i$ such that:
\begin{enumerate}
\item the set $A_i \cup H_i$ generates the group $\G_i$,
\item the group $A_i$ is finite, acting transitively on a finite set $\{1,\dots,d_i\}$ of size $d_i \geq 2$,
\item the group $H_i$ is finitely generated, 
\end{enumerate}
\item there is an isomorphism:
$$ \f_i: \G_i \longrightarrow \G_{i+1} \wr A_i = (\G_{i+1} \times \dots \times \G_{i+1}) \rtimes A_i, $$
with $d_i$ factors in the direct product, on which $A_i$ is acting by permutation of coordinates, according to its transitive action on $\{1,\dots,d_i\}$. Moreover, this isomorphism $\f_i$ satisfies:
\begin{enumerate}
\item $\forall s \in A_i, \f_i(s)=(e_{\G_{i+1}},\dots,e_{\G_{i+1}})s$,
\item $\forall h_i \in H_i, \exists h_{i+1} \in H_{i+1}, \exists a_2,\dots,a_{d_0} \in A_{i+1}, \exists \r \in A_i$, with $\r(1)=1$ and:
$$\f_i(h_i)=(h_{i+1},a_2,\dots,a_{d_i})\r, $$
where the groups $A_{i+1}$ and $H_{i+1}$ are the subgroups of $\G_{i+1}$ satisfying condition (1).
\end{enumerate}
\end{enumerate}
A group $\G$ is said to have property $\Dc\Pc$ if there exists a sequence $\{\G_i\}_{i \in \N}$ with property $\Dc\Pc$ such that $\G \simeq \G_0$.
\end{definition}


Groups with property $\Dc\Pc$ are related to the groups of non-uniform growth constructed by Wilson (see \cite{Wil1},\cite{Wil2},\cite{Bri1}). In particular, if all the groups $\G_i$ of a sequence with property $\Dc\Pc$  are generated by a finite number (independent of $i$) of involutions, and if all the groups $A_i$ involved are alternate groups $\Ac_{d_i}$ acting on sets of size $d_i\geq 29$, then they have non-uniform growth by \cite{Wil2}. This is the case of the examples  in proposition \ref{dpbounded} below.

\begin{fact}\label{tree}
If $\G_0$ belongs to $\Dc\Pc$, there exists a sequence $\bar d=(d_i)_i$ of integers $d_i \geq 2$, and the group $\G_0$ is acting by automorphisms on the spherically homogeneous rooted tree $T_{\bar d}$. This action is transitive on each level.
\end{fact}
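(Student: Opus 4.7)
The plan is to iterate the isomorphism $\f_i$ of Definition \ref{defdp}(2) to recover an action of $\G_0$ on the tree $T_{\bar d}$, in exact analogy with the construction for $Aut(T_{\bar d})$ in section \ref{rootree}.

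First I would fix the valency sequence $\bar d=(d_i)_i$ provided by Definition \ref{defdp}(1b). Composing $\f_0,\f_1,\dots,\f_{k-1}$ gives, for every $k\geq 1$, a canonical isomorphism
$$\F_k: \G_0 \xrightarrow{\simeq} \G_k \wr A_{k-1} \wr \dots \wr A_0,$$
so that any $g \in \G_0$ acquires a portrait of permutations in $A_0,\dots,A_{k-1}$ indexed by the vertices of the first $k$ levels of $T_{\bar d}$, together with a residual factor in $\G_k$ indexed by level $k+1$. Reading off the permutation part of this portrait according to the usual wreath-product formula defines an action of $\G_0$ on the level-$k$ vertex set $\{1,\dots,d_0\}\times \dots \times\{1,\dots,d_{k-1}\}$. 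Since $\F_{k+1}$ is obtained by plugging $\f_k$ into the level-$(k+1)$ entries of $\F_k$, these level-wise actions agree under the tree projection (deleting the last coordinate), so they assemble into a single action of $\G_0$ on the whole of $T_{\bar d}$ that fixes the root and respects adjacency, i.e.\ is by graph automorphisms.

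For transitivity on each level, I would run an induction on $k$ applied simultaneously to every shifted sequence $(\G_i)_{i \geq i_0}$ with its tree $T_{s^{i_0}\bar d}$. The base case $k=1$ is immediate from conditions (1b) and (2a), which together say that $A_{i_0}\subset\G_{i_0}$ acts transitively on level $1$ of $T_{s^{i_0}\bar d}$. For the inductive step, given vertices $u=u_0u'$ and $v=v_0v'$ at level $k+1$ of $T_{s^{i_0}\bar d}$: first send $u_0$ to $v_0$ by an element of $A_{i_0}$; then observe that $\f_{i_0}^{-1}$ of the direct-product slot $\{e\}\times\dots\times\G_{i_0+1}\times\dots\times\{e\}$ (with $\G_{i_0+1}$ in the $v_0$-th position) sits inside $\G_{i_0}$ and acts on the subtree rooted at $v_0$ exactly as $\G_{i_0+1}$ acts on $T_{s^{i_0+1}\bar d}$; the inductive hypothesis applied to $\G_{i_0+1}$ then sends the image of $u'$ to $v'$.

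The only real obstacle is notational bookkeeping: one must verify carefully that the permutation assigned to a given vertex by the portrait of $g$ does not depend on the depth $k$ at which $\F_k$ is evaluated, so that the level-wise actions actually glue into a single action of $\G_0$ on $T_{\bar d}$. Everything else --- preservation of adjacency, fixing of the root, and the inductive transitivity argument --- falls out directly from the iterated wreath-product structure encoded in Definition \ref{defdp}.
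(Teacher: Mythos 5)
Your proposal is correct and follows essentially the same route as the paper: iterate the isomorphisms $\f_i$ of Definition \ref{defdp} to get $\G_0 \simeq \G_k \wr A_{k-1} \wr \dots \wr A_0$, read off the induced action on each level, and pass to the limit, with transitivity coming from the transitivity of each $A_i$. The only difference is that you spell out in full the standard inductive argument that an iterated wreath product of transitive permutation groups acts transitively on the product of the underlying sets, which the paper states in one line.
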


Note that this action on the tree is not necessarily faithful. For instance, the subgroup $F$ of the group $\G=\G(\Ac_{d_0},A_{\bar d},F)$ of section 2.4 of \cite{Bri3} has a trivial action on the tree $T_{\bar d}$, even though $\G$ has property $\Dc\Pc$, for the sequence $\G_i=\G(\Ac_{d_i},A_{s^i\bar d},F)$.

\begin{proof}
Combining the isomorphisms of Definition \ref{defdp}, there is an isomorphism $\G_0 \simeq \G_{i+1} \wr A_i \wr \dots \wr A_0$. As $A_i$ is acting transitively on $\{1,\dots,d_i\}$, the group $A_i \wr \dots \wr A_0$ is acting transitively on $\{1,\dots,d_0\} \times \dots \times \{1,\dots,d_i\}$, which is the $i+1$st level of $T_{\bar d}$. Taking the limit with $i$, this provides the action on the tree $T_{\bar d}$.
\end{proof}

Consider a group $\G_0$ with property $\Dc\Pc$ and take notations of Definition \ref{defdp}. Let $B_0$ be a fixed finite generating set of $H_0$. Define inductively the sequence $B_i$ of subsets of $H_i$ by condition (2)(b). For any $b_i \in B_i$, set $b_{i+1}$ to be the unique element in $H_{i+1}$ such that $\f_i(b_i)=(b_{i+1},a_2,\dots,a_{d_i})\r$, and $B_{i+1}=\{b_{i+1}|b_i \in B_i\}$. By construction, $B_i$ is a subset of $H_i$ of size $\leq |B_0|$. It is not true in general that $B_i$ generates $H_i$ for all $i\in \N$, however, we have:

\begin{fact}
Let $\G_0$ have property $\Dc\Pc$ and $B_i$ be as above. Then the conditions of Definition \ref{defdp} are fulfilled if $H_i$ is replaced by the subgroup $\langle B_i \rangle$ of $\G_i$.
\end{fact}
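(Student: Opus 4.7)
The plan is to verify each clause of Definition~\ref{defdp} under the replacement of $H_i$ by $\langle B_i \rangle$, keeping $\G_i$, $A_i$ and $\f_i$ unchanged. Three items will require essentially no work: condition (1)(b) concerns only $A_i$; condition (1)(c) holds because $B_i$ is finite (of cardinality at most $|B_0|$) and generates $\langle B_i \rangle$ by definition; and condition (2)(a) again involves only $A_i$. The substance therefore lies in (2)(b) and the generating condition (1)(a).

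For (2)(b), I would proceed by induction on the length of a word in the alphabet $B_i \cup B_i^{-1}$ representing $h_i \in \langle B_i \rangle$. The single-letter case $h_i = b_i \in B_i$ is precisely the definition of $b_{i+1}$, of the $a_k$'s and of $\r$. For the inverse, the wreath product inverse formula gives
$$\f_i(b_i^{-1}) = \bigl(b_{i+1}^{-1}, a_{\r^{-1}(2)}^{-1},\dots,a_{\r^{-1}(d_i)}^{-1}\bigr)\r^{-1},$$
which still has the required shape because $\r^{-1}(1)=1$. For the inductive step, multiplying two elements $(h,a_2,\dots,a_{d_i})\r$ and $(c,a_2',\dots,a_{d_i}')\s$ both of the prescribed form yields first coordinate $hc \in \langle B_{i+1} \rangle$ (since $\r(1)=1$ selects the first slot of the second factor), remaining coordinates $a_k a_{\r(k)}' \in A_{i+1}$ (since $\r(k)\ne 1$ for $k\ne 1$), and permutation part $\r\s$ still fixing $1$.

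For (1)(a), the plan is a short induction on $i$ to show $\langle A_i \cup B_i \rangle = \G_i$. The base case $i=0$ is immediate from $\langle B_0 \rangle = H_0$. For the inductive step, the already verified (2)(a) and (2)(b) force $\f_i(A_i)$ and $\f_i(B_i)$ both to lie in the subgroup $\langle A_{i+1} \cup B_{i+1} \rangle \wr A_i$ of $\G_{i+1} \wr A_i$; hence so does $\f_i(\langle A_i \cup B_i \rangle)$. By the induction hypothesis this image equals $\f_i(\G_i) = \G_{i+1} \wr A_i$, which forces $\langle A_{i+1} \cup B_{i+1} \rangle = \G_{i+1}$.

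Conceptually, the main obstacle is this last step: although the paper warns that $\langle B_i \rangle$ can be strictly smaller than $H_i$, the rigidity of the iterated wreath product isomorphisms forces $\langle A_i \cup B_i \rangle$ to fill out all of $\G_i$ at every level. Everything else is bookkeeping with the wreath product rule, whose closure under products and inverses on the class of elements of the prescribed form relies crucially on the fixed-point condition $\r(1)=1$.
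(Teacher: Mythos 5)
Your proposal is correct and takes essentially the same route as the paper: the decisive step in both is that $\f_i(\G_i)=\G_{i+1}\wr A_i$ is contained in $\langle A_{i+1}\cup B_{i+1}\rangle\wr A_i$, which forces $\langle A_{i+1}\cup B_{i+1}\rangle=\G_{i+1}$, with the rest handled by induction on $i$. Your word-length induction for condition (2)(b) just makes explicit the closure under products and inverses that the paper leaves implicit in ``(2) is true by definition of $B_1$''.
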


\begin{proof}
It is sufficient to check conditions (1) and (2) of Definition \ref{defdp} for all $i$ in $\N$. For $i=0$, (1) is true since $H_0=\langle B_0 \rangle$, and (2) is true by definition of $B_1$. Then:
$$\G_1 \wr A_0 \simeq \f_0(\G_0) \subset \langle B_1 \cup A_1 \rangle \wr A_0. $$
The inclusion is forced to be  an equality since $A_1, B_1$ are included in $\G_1$, thus $A_1 \cup B_1$ generates $\G_1$. This shows that $H_1$ can be replaced by $\langle B_1\rangle$. The fact follows by induction.
\end{proof}

This shows that up to replacing the groups $H_i$ by the groups $\langle B_i \rangle$, which does not affect the groups $\G_i$, we may and shall assume that $B_i$ is a canonical generating set for $H_i$. 

\begin{fact}\label{fact2}
The group $H_0$ is amenable if and only if the groups $H_i$ are amenable for all $i$.
\end{fact}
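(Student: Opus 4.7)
The plan is to exhibit a surjective homomorphism $\pi_i \colon H_i \twoheadrightarrow H_{i+1}$ for every $i \in \N$. Once this is in hand, both implications follow immediately: the backward direction (all $H_i$ amenable implies $H_0$ amenable) is trivial since $H_0$ is one of the $H_i$, and the forward direction follows by induction from the fact that amenability passes to quotients, propagating along the chain $H_0 \twoheadrightarrow H_1 \twoheadrightarrow H_2 \twoheadrightarrow \cdots$.

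To construct $\pi_i$ I would take the first-coordinate projection composed with $\f_i$. By condition (2)(b) of Definition \ref{defdp}, every $h \in H_i$ satisfies $\f_i(h) = (h_{i+1},a_2,\dots,a_{d_i})\r$ with $h_{i+1} \in H_{i+1}$, with $a_j \in A_{i+1}$ for $j\geq 2$, and with $\r \in A_i$ fixing $1$, so I would set $\pi_i(h) := h_{i+1}$.

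The main step will be to verify multiplicativity. Let $K_i \subset \G_{i+1} \wr A_i$ denote the set of elements of the form above. The fixed-point condition $\r(1)=1$ says precisely that $\r$ preserves both $\{1\}$ and $\{2,\dots,d_i\}$, so under the product rule $(g_j)\s \cdot (g'_j)\s' = (g_j g'_{\s(j)})\,\s\s'$ the first coordinate of a product of two elements of $K_i$ is simply the product of their first coordinates, while the remaining $d_i-1$ coordinates multiply among themselves inside $A_{i+1}$; a parallel computation handles inverses. Thus $K_i$ is a subgroup of $\G_{i+1}\wr A_i$, canonically isomorphic to $H_{i+1} \times \bigl(A_{i+1}^{d_i-1} \rtimes \mathrm{Stab}_{A_i}(1)\bigr)$, and the first-slot projection $K_i \to H_{i+1}$ is a group homomorphism. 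Composing with $\f_i$ restricted to $H_i$ gives the desired map $\pi_i$. Its image contains $B_{i+1}$ by the very construction of that set from $B_i$, and the preceding fact lets us assume $B_{i+1}$ generates $H_{i+1}$, so $\pi_i$ is surjective.

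I do not anticipate any serious obstacle; the only point worth flagging is the decisive role of the condition $\r(1)=1$ in Definition \ref{defdp} (2)(b), which is precisely what makes the first-slot projection multiplicative and thereby presents $H_{i+1}$ as a quotient of $H_i$.
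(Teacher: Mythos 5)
Your proof is correct and its substantive half --- realizing $H_{i+1}$ as a quotient of $H_i$ via the first-coordinate projection, which is a homomorphism precisely because $\r(1)=1$ --- is exactly the mechanism of the paper's proof, phrased there as ``any relation between the generators in $B_i$ implies a relation between the corresponding generators in $B_{i+1}$, thus $H_{i+1}$ is a quotient of $H_i$.'' The only divergence is that you dismiss the converse direction as trivial (since $H_0$ is itself one of the $H_i$), whereas the paper instead establishes the stronger level-by-level converse, that amenability of $H_{i+1}$ implies that of $H_i$, via the embedding $H_i \hookrightarrow H_{i+1}\times (A_{i+1}\wr Fix_{A_i}(1))$ whose second factor is finite; neither step is needed for the statement as literally phrased, so your shortcut is legitimate.
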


\begin{proof}
By (2)(b), the restriction of $\f_0$ to $H_0$ provides an embedding:
$$\f_0|_{H_0}:H_0 \hookrightarrow H_1 \times (A_1 \wr Fix_{A_0}(1)). $$
As the second factor is a finite group, amenability of $H_1$ implies that of $H_0$.

Conversely assume that $H_0$ is amenable. By (2)(b), any relation between the generators in $B_0$ implies a relation between the corresponding generators in $B_1$ of $H_1$. Thus $H_1$ is a quotient of $H_0$, hence is amenable.

The same proof shows that amenability of $H_{i+1}$ is equivalent to that of $H_i$.
\end{proof} 

\begin{question}
If a group $\G_0=\langle A_0 \cup H_0\rangle$ has property $\Dc\Pc$ with $H_0$ amenable, is the group $\G_0$ amenable?
\end{question}

The following theorem provides a partial answer, with a condition on the sequence of integers $\bar d=(d_i)_i$.

\begin{theorem}\label{MT}
Let $\G_0$ have property $\Dc\Pc$ with $H_0$ amenable and $\bar d$ growing sufficiently slowly (for instance $\frac{d_k}{\log k}\rightarrow 0$), then $\G_0$ is amenable.
\end{theorem}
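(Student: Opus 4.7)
The plan is to mirror the construction of Section \ref{first} with Folner sets of the amenable groups $H_{i+1}$ playing the role of the finite directed subgroup $B$. Since $H_0$ is amenable, Fact \ref{fact2} gives amenability of every $H_i$, so for each $i$ one can fix a Folner sequence $F^{(i)}_n\subset H_i$ for the generating set $B_i$, with boundary ratios $\epsilon^{(i)}_n:=|\partial_{B_i}F^{(i)}_n|/|F^{(i)}_n|$ tending to $0$. Using the wreath product isomorphisms $\f_i$ of Definition \ref{defdp}, one then defines recursively finite subsets $L_k^{(i)}\subset \G_i$ for each $i\geq 0$ and $k\geq 0$:
\[
L_0^{(i)}=\{g=(\g,\a_2,\dots,\a_{d_i})\s:\g\in F^{(i+1)}_{n_i},\,\a_t\in A_{i+1},\,\s\in A_i\},
\]
\[
L_{k+1}^{(i)}=\{g=(g_1,\dots,g_{d_i})\s:\s\in A_i,\,\forall t\,g_t\in L_k^{(i+1)},\,\exists T\,g_T\in Int(L_k^{(i+1)})\},
\]
with integer parameters $n_i\in\N$ to be chosen later. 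The goal is that the sets $L_k:=L_k^{(0)}$ form a Folner sequence in $\G_0$.

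The analogues of Fact \ref{factL0} and Lemma \ref{lemmaLk} go through by the same induction. The essential input is that for any generator $b\in B_i$, the decomposition $\f_i(b)=(h_{i+1},a_2,\dots,a_{d_i})\r$ from Definition \ref{defdp}(2)(b) satisfies $\r(1)=1$, which is what makes boundary and interior propagate cleanly through the first coordinate. In the base case one finds that $g\in Int(L_0^{(i)})$ iff $\s(1)=1$ and $\g\in Int_{B_{i+1}}(F^{(i+1)}_{n_i})$, giving $1-\d_0^{(i)}=(1-\epsilon^{(i+1)}_{n_i})/d_i$. The counting argument of Lemma \ref{delta} then produces the recursion
\[
1-\d_{k+1}^{(i)}=\frac{1-\d_k^{(i+1)}}{1-(\d_k^{(i+1)})^{d_i}},\qquad \d_k^{(i)}:=\frac{|\partial L_k^{(i)}|}{|L_k^{(i)}|}.
\]

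The hard step is showing $\d_k^{(0)}\to 0$ when $\bar d$ is unbounded. Choose the parameters so that $\epsilon^{(i+1)}_{n_i}\leq 2^{-i}$; then $w_0:=1-\d_0^{(k)}\geq 1/(2d_k)$ for $k$ large. Writing $w_j:=1-\d_j^{(k-j)}$, the recursion takes the form $w_{j+1}=w_j/\bigl(1-(1-w_j)^{d_{k-j-1}}\bigr)$, so the sequence $(w_j)_{0\leq j\leq k}$ is non-decreasing with
\[
w_{j+1}-w_j=w_j\,\frac{(1-w_j)^{d_{k-j-1}}}{1-(1-w_j)^{d_{k-j-1}}}.
\]
If $w_k\leq 1-\eta$ for some $\eta\in(0,1)$, then $1-w_j\geq \eta$ throughout, so $w_{j+1}-w_j\geq w_0\,\eta^{d_{k-j-1}}$, and telescoping gives
\[
\sum_{i=0}^{k-1}\eta^{d_i}\leq \frac{1-\eta}{w_0}\leq 2d_k.
\]
The hypothesis $d_k/\log k\to 0$ forces $\eta^{d_i}=\exp(-d_i\log(1/\eta))\geq i^{-1/2}$ eventually, so the left-hand sum is at least of order $\sqrt k$, which exceeds $2d_k=o(\log k)$ for large $k$. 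This contradiction forces $w_k\to 1$, hence $\d_k^{(0)}\to 0$ and $\G_0$ is amenable. The quantitative estimate at this last step, with its divergent-sum argument, is where the growth restriction on $\bar d$ enters and is the main obstacle of the proof.
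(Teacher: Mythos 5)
Your overall architecture (iterated wreath-product sets with a Folner set of the amenable group planted at the bottom level, plus an asymptotic analysis of a recursion for the boundary ratio) is the same as the paper's, and your divergent-sum argument in the last step is a correct and arguably more transparent alternative to the paper's Lemma \ref{epsilon}. But there is a genuine gap at the step you dismiss with ``the analogues of Fact \ref{factL0} and Lemma \ref{lemmaLk} go through by the same induction.'' They do not, if $Int$ means the true interior. The induction in Lemma \ref{lemmaLk} hinges on the closure property (1)$\Rightarrow$(4): if $g\in Int(L_k)$ then $gb\in Int(L_k)$ for every generator $b$. In Section \ref{first} this holds because $B$ is a finite \emph{group}, so at the bottom level the interior of $L_0$ is cut out by the condition $\s^{-1}(1)=1$ alone. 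In your setting the bottom-level condition is $\g\in Int_{B_{i+1}}(F^{(i+1)}_{n_i})$, and for a Folner set $F$ of an infinite group it is false that $\g\in Int(F)$ implies $\g b\in Int(F)$: an interior point may have a neighbour on the boundary. Consequently the distinguished coordinate $g_{\s^{-1}(1)}b'$ of $gb$ need not lie in $Int(L_k^{(i+1)})$, the witness for the clause ``$\exists T,\ g_T\in Int(L_k^{(i+1)})$'' can be destroyed (the remaining coordinates get multiplied by elements of $A_{i+1}$, which also does not preserve the interior, since interiority depends on the root permutation), and both the characterization of $Int(L_{k+1}^{(i)})$ and the counting identity $|J_I\cap Int(L_{k+1}^{(i)})|=\frac{|I|}{d}|J_I|$ break down. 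Your recursion for $\d_k^{(i)}$ is therefore unjustified as stated.

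The paper's fix is exactly the device you are missing: it introduces a ``combinatorial interior'' $\i L_k^K(\O)$, defined at the bottom by the condition $\s^{-1}(1)=1$ \emph{with no condition on the $H$-coordinate}, uses $\i L$ (not $Int$) both in the recursive definition of $L_{k+1}^K(\O)$ and in the recursion for the ratio $1-\e_k=|\i L_k^K(\O)|/|L_k^K(\O)|$, which then starts from $\e_0=1-\frac{1}{d_K}$ independently of $\O$; only at the very end are the two notions related by the product formula $|Int(L_K^K(\O))|=|\i L_K^K(\O)|\cdot\frac{|Int(\O)|}{|\O|}$ of Fact \ref{int}, and amenability of $H_{K+1}$ enters solely through this last factor. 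If you replace $Int(L_k^{(i+1)})$ by such a combinatorial interior in your definitions, your induction goes through and your estimate of the recursion (applied to $\e_k$ rather than $\d_k$) does complete the proof.
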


This theorem generalizes Theorem \ref{deg0}. The proof is similar, though slightly more technical. 

\subsection{Proof of Theorem \ref{MT}}

Given $\G_0=\langle A_0 \cup B_0\rangle$ with property $\Dc\Pc$, consider the associated sequence of finitely generated groups $\G_K=\langle A_K \cup B_K\rangle$, where $B_K$ is the canonical generating set of the group $H_K$. The notions of interior and boundary used below refer to these generating sets.

To ease notations, write $g$ instead of $\f_K(g)$.  For $\O \subset H_{K+1}$, set:
$$L_0^K(\O)=\{g \in \G_K|\exists h \in \O,\a_2,\dots,\a_{d_K} \in A_{K+1},\s \in A_K, g=(h,\a_2,\dots,\a_{d_K})\s\}, $$
$$\i L_0^K(\O)=\{g \in L_0^K(\O)|\s^{-1}(1)=1\}, $$
and by induction for $1 \leq k \leq K$, set:
$$ L_k^K(\O)=\{g=(g_1,\dots,g_{d_{K-k}})\s\in\G_{K-k}| \forall t, g_t \in L_{k-1}^K(\O),\exists T, g_T \in \i L_{k_1}^K(\O) \},$$
$$\i L_k^K(\O)=\{g \in L_k^K(\O)|g_{\s^{-1}(1)} \in \i L_{k-1}^K(\O)\}. $$

The sets $\i L_k^K(\O)$ should be considered as ``combinatorial interiors'' of $L_k^K(\O)$. They satisfy a combinatorial description as Remark \ref{remcomb}, but slightly differ from the actual interior of $L_k^K(\O)$, unless the set $\O$ has empty boundary (see Remark \ref{combin} below). Fact \ref{factL0} generalizes as:

\begin{fact}\label{L0K}
The three following are equivalent:
\begin{enumerate}
\item $g \in Int(L_0^K(\O))$,
\item $gb_K \in L_0^K(\O)$ for all $b_K \in B_K$,
\item $\s^{-1}(1)=1$ and $h \in Int(\O)\subset \O \subset H_{K+1}$.
\newline \newline
Moreover they also imply:
\item $gb_K \in \i L_0^K(\O)$ for all $b_K \in B_K$.
\end{enumerate}
In particular, $\frac{|Int(L_0^K(\O))|}{|L_0^K(\O)|}=\frac{|Int(\O)|}{d_K|\O|}$, and $\d_0^K(\O)=\frac{|\partial L_0^K(\O)|}{|L_0^K(\O)|}=1-\frac{|Int(\O)|}{d_K|\O|}$.
\end{fact}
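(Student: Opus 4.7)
The plan is to mirror the template of the proof of Fact \ref{factL0}, while tracking separately two pieces of information attached to $g=(h,\a_2,\dots,\a_{d_K})\s$: the permutation condition $\s^{-1}(1)=1$ and the membership $h\in \O$ (resp.\ $h\in Int(\O)$). First I would dispose of multiplication by $a\in A_K$: condition (2)(a) of Definition \ref{defdp} gives $\f_K(a)=(e,\dots,e)\s_a$, so $ga=(h,\a_2,\dots,\a_{d_K})\s\s_a$ lies in $L_0^K(\O)$ whenever $g$ does, since only the rightmost permutation is altered. Thus $g\in Int(L_0^K(\O))$ reduces to checking $gb_K\in L_0^K(\O)$ for all $b_K\in B_K$, yielding $(1)\Leftrightarrow(2)$ immediately.

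Next I would compute $\f_K(gb_K)$ by the wreath product rule, using the form $\f_K(b_K)=(b_{K+1},a_2,\dots,a_{d_K})\r$ with $\r(1)=1$ coming from condition (2)(b). In the case $\s^{-1}(1)=1$, the factor $b_{K+1}$ lands in position $1$, so
\[
\f_K(gb_K)=(h b_{K+1},\a_2 a_{\s(2)},\dots,\a_{d_K}a_{\s(d_K)})\,\s\r,
\]
and the $A_{K+1}$-coordinates are preserved. Hence $gb_K\in L_0^K(\O)$ iff $h b_{K+1}\in \O$, and quantifying over $b_K\in B_K$ (so $b_{K+1}$ ranges over the canonical generating set $B_{K+1}$ of $H_{K+1}$) this is exactly $h\in Int(\O)$. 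This gives $(3)\Rightarrow (2)$. For $(3)\Rightarrow (4)$, I observe $(\s\r)^{-1}(1)=\r^{-1}(\s^{-1}(1))=\r^{-1}(1)=1$, using the fixed point $\r(1)=1$.

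In the opposite case $\s^{-1}(1)=T\neq 1$, the factor $b_{K+1}$ lands in position $T$, producing first coordinate $h\,a_{\s(1)}$ (with $a_{\s(1)}\in A_{K+1}$) and $T$-th coordinate $\a_T b_{K+1}$ (with $b_{K+1}\in H_{K+1}$). Since $A_{K+1}$ and $H_{K+1}$ are the two distinguished subgroups attached to $\G_{K+1}$, for nontrivial choices of $b_K$ these products leave $\O\subset H_{K+1}$ and $A_{K+1}$ respectively, so $gb_K\notin L_0^K(\O)$, proving the contrapositive of $(2)\Rightarrow (3)$.

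Finally I would count: $L_0^K(\O)$ is in bijection with $\O\times A_{K+1}^{d_K-1}\times A_K$, and transitivity of $A_K$ on $\{1,\dots,d_K\}$ gives $|\{\s:\s^{-1}(1)=1\}|=|A_K|/d_K$. Combining with the extra factor $|Int(\O)|/|\O|$ imposed on $h$ for interior points yields $|Int(L_0^K(\O))|/|L_0^K(\O)|=|Int(\O)|/(d_K|\O|)$ and the claimed value of $\d_0^K(\O)$. The main technical obstacle is the case $\s^{-1}(1)\neq 1$: showing $gb_K\notin L_0^K(\O)$ requires that the ``mixed'' products $h\,a_{\s(1)}$ and $\a_T b_{K+1}$ genuinely escape $H_{K+1}$ and $A_{K+1}$. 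This is the point where the non-degeneracy of the subgroup pair $(A_{K+1},H_{K+1})$ in $\G_{K+1}$ (implicit in the $\Dc\Pc$ setup) is really used, and care must be taken to invoke a $b_K\in B_K$ with $b_{K+1}\notin A_{K+1}$ to make the contradiction work.
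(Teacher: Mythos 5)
Your proposal is correct and follows essentially the same route as the paper: dispose of $A_K$-multiplication to get $(1)\Leftrightarrow(2)$, compute $gb_K$ in the wreath product splitting on whether $\s^{-1}(1)=1$, deduce $(4)$ from $\r(1)=1$, and count via orbit--stabilizer for the transitive $A_K$-action. The ``technical obstacle'' you flag in the case $\s^{-1}(1)\neq 1$ (that $h\,a_{\s(1)}$ and $\a_T b_{K+1}$ must genuinely leave $\O$ and $A_{K+1}$) is not addressed in the paper either, which simply reads the equivalence of $(2)$ and $(3)$ off the displayed formula, so your treatment is if anything slightly more careful on that point.
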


\begin{proof} 
Let $g=(h,\a_2,\dots,\a_{d_K})\s$ belong to $L_0^K(\O)$. By (2)(a) of Definition \ref{defdp} for $A_K$, the element $ga_K$ still belongs to $L_0^K(\O)$ for $a_K$ in $A_K$. This proves equivalence of (1) and (2).

Now take $b_K=(b_{K+1},a_2,\dots,a_{d_K})\r$ in $B_K$, then:
$$gb_K=\left\{ \begin{array}{ll} (h b_{K+1},\a_2 a_{\s(2)},\dots,\a_d a_{\s(d)})\s\r & \textrm{ if }\s^{-1}(1)= 1,  \\
(h a_{\s(1)},\a_2 a_{\s(2)},\dots,\a_{\s^{-1}(1)}b_{K+1},\dots,\a_d a_{\s(d)})\s\r & \textrm{ if }\s^{-1}(1) \neq 1.  \end{array} \right. $$
This shows that $gb_K$ belongs to $L_0^K(\O)$ for all $b_K$ if and only if $\s^{-1}(1)=1$ and $h$ belongs to $Int(\O)$, i.e. equivalence of (2) and (3).

This implies (4) because then $(\s\r)^{-1}(1)=1$. Computing the sizes follows from~(3).
\end{proof}


\begin{notation}Let $g =(g_1,\dots,g_{d_i})\s=(g_{t_i})\s$ in $\G_i$, with $\s$ in $A_i$, $g_{t_i}$ in $\G_{i+1}$ for $t_i \in \{1,\dots,d_i\}$ by identification of $g$ with $\f_i(g)$. More generally, identify $g_{t_i\dots t_j}$ with $\f_{j+1}(g_{t_i\dots t_j})$ for $i\leq j \leq K$ and denote:
$$g=(g_{t_i\dots t_K})(\s_{t_i\dots t_{K-1}})\dots(\s_{t_i})\s,$$
where $\s_{t_i\dots t_{j}}$ belongs to $A_{j+1}$ and $g_{t_i\dots t_K}$ to $\G_{K+1}$. Set $\t_i=\s^{-1}(1)\in \{1,\dots,d_i\}$, and by induction $\t_{j+1}=(\s_{\t_i\dots \t_j})^{-1}(1)\in \{1,\dots,d_{j+1}\}$, which guarantees $g(\t_i\t_{i+1}\dots \t_j)=11\dots 1$ for the action on the tree of fact \ref{tree}. 
\end{notation}

The following generalizes Lemma \ref{lemmaLk}.

\begin{lemma}
For $0\leq k \leq K$, the three following are equivalent:
\begin{enumerate}
\item $g \in Int(L_k^K(\O))$,
\item $gb_{K-k} \in L_k^K(\O)$ for all $b_{K-k} \in B_{K-k}$,
\item $g \in \i L_k^K(\O)$ (i.e. $\s^{-1}(1) \in I(g)=\{T|g_T \in \i L_{k-1}^K(\O)\}$) and $g_{\t_{K-k}\dots \t_K} \in Int(\O)$.
\newline \newline
Moreover, they also imply:
\item $gb_{K-k} \in \i L_k^K(\O)$ for all $b_{K-k} \in B_{K-k}$.
\end{enumerate}
\end{lemma}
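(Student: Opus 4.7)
The plan is to proceed by induction on $k$, with the base case $k=0$ being precisely Fact \ref{L0K}: for $k=0$, the combinatorial condition $g\in\i L_0^K(\O)$ reduces to $\s^{-1}(1)=1$, so $g_{\t_K}=g_1=h$, and the supplementary requirement $h\in Int(\O)$ matches condition (3) of that fact, while the additional conclusion (4) is also recorded there.

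For the inductive step I would first write out $gb_{K-k}$ explicitly. With $g=(g_1,\ldots,g_{d_{K-k}})\s$ and $\f_{K-k}(b_{K-k})=(b_{K-k+1},a_2,\ldots,a_{d_{K-k}})\r$ where $\r(1)=1$, the coordinate at position $t$ is $g_t\,a_{\s(t)}$ for $t\neq T_0:=\s^{-1}(1)$ and $g_{T_0}\,b_{K-k+1}$ at position $T_0$. A useful first observation is that right-multiplication by any element of $A_{K-k+1}$ only modifies the top $A_{K-k+1}$-permutation of an element of $\G_{K-k+1}$ without touching its lower coordinates, and therefore preserves membership in $L_{k-1}^K(\O)$; the analogous statement one level up shows that (1)$\Rightarrow$(2) is trivial and, conversely, that if $gb\in L_k^K(\O)$ for all $b\in B_{K-k}$ then also $ga\in L_k^K(\O)$ for all $a\in A_{K-k}$, yielding (2)$\Rightarrow$(1).

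For the remaining implications, I would follow the pattern of Lemma \ref{lemmaLk}. For (2)$\Rightarrow$(3): requiring $gb_{K-k}\in L_k^K(\O)$ for every $b_{K-k}\in B_{K-k}$ forces $g_{T_0}\,b_{K-k+1}\in L_{k-1}^K(\O)$ for every $b_{K-k+1}\in B_{K-k+1}$ (as $b_{K-k+1}$ ranges over all of $B_{K-k+1}$ by construction); the inductive (1)$\Leftrightarrow$(2) at level $k-1$ then gives $g_{T_0}\in Int(L_{k-1}^K(\O))$, and applying the inductive (1)$\Leftrightarrow$(3) yields both $g_{T_0}\in\i L_{k-1}^K(\O)$ and $g_{T_0,\t_{K-k+1},\ldots,\t_K}\in Int(\O)$; since $T_0=\t_{K-k}$, this is exactly (3). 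For (3)$\Rightarrow$(4): $g_{T_0}$ satisfies (3) at level $k-1$, so the inductive implication (3)$\Rightarrow$(4) delivers $g_{T_0}\,b_{K-k+1}\in\i L_{k-1}^K(\O)$ for all $b_{K-k+1}$, while the other coordinates of $gb_{K-k}$ lie in $L_{k-1}^K(\O)$ by the opening observation; hence $gb_{K-k}\in L_k^K(\O)$, and for $gb_{K-k}\in\i L_k^K(\O)$ I would invoke the key identity $(\s\r)^{-1}(1)=\s^{-1}(\r^{-1}(1))=\s^{-1}(1)=T_0$, valid because $\r(1)=1$, which shows that $T_0$ witnesses the membership. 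Finally (4)$\Rightarrow$(2) is immediate from $\i L_k^K(\O)\subset L_k^K(\O)$.

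The hard part is really bookkeeping rather than conceptual: one must carefully align the combinatorial interior $\i L_k^K(\O)$ (a condition along the $\t$-path inside the tree) with the metric interior $Int(L_k^K(\O))$, which additionally records the $Int(\O)$-requirement at the deepest vertex $\t_{K-k}\dots\t_K$. The telescoping $T_0=\t_{K-k}$ together with $(\s_{\t_{K-k}})^{-1}(1)=\t_{K-k+1}$, combined with the fixed-point condition $\r(1)=1$ from Definition \ref{defdp}(2)(b), is what forces these two notions to propagate coherently through the induction.
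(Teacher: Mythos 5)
Your proposal is correct and follows essentially the same route as the paper: induction on $k$ with Fact \ref{L0K} as the base case, the observation that right multiplication by rooted elements of $A_{K-k}$ (resp.\ $A_{K-k+1}$) does not affect membership in $L_k^K(\O)$ (resp.\ $L_{k-1}^K(\O)$), reduction of each condition for $g$ to the corresponding condition for the coordinate $g_{\s^{-1}(1)}=g_{\t_{K-k}}$, and the identity $(\s\r)^{-1}(1)=\s^{-1}(1)$ coming from $\r(1)=1$. The only (immaterial) difference is that you close the cycle as $(2)\Rightarrow(3)\Rightarrow(4)\Rightarrow(2)$, whereas the paper proves $(2)\Rightarrow(3)$, $(2)\Rightarrow(4)$ and $(3)\Rightarrow(2)$ separately.
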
 

Observe that if $g \in \i L_k^K(\O)$, then $g_{\t_{K-k}\dots \t_K} \in \O$, by definitions of $\i L_k^K(\O)$ and $\t_{K-k}\dots \t_K$.

\begin{proof}
Let $g=(g_1,\dots,g_{d_{K-k}})\s$ belong to $L_k^K(\O)$. For $a$ in $A_{K-k}$, $ga$ still belongs to $L_k^K(\O)$ (no condition on $\s$). Thus (1) is equivalent to (2). To prove equivalence with (3) and implication of (4), proceed by induction on $0 \leq k \leq K$. The case $k=0$ was treated as fact \ref{L0K} (where $h=g_1=g_{\s^{-1}(1)}=g_{\t_K}$), now assume the lemma is known for $k-1$.

For $b_{K-k}=(b_{K-k+1},a_2,\dots,a_{d_{K-k}})\r$, one has:
$$gb_{K-k}=(g_1a_{\s(1)},\dots,g_{\s^{-1}(1)}b_{K-k+1},\dots,g_{d_{K-k}}a_{\s(d_{K-k})})\s\r. $$

Assume (2) for $g$, then $g_{\s^{-1}(1)}b_{K-k+1} \in L_{k-1}^K(\O)$ for all $b_{K-k+1} \in B_{K-k+1}$, which means (2) for $k-1$ applied to $g_{\s^{-1}(1)}$. By induction hypothesis, $g_{\s^{-1}(1)}$ satisfies (3), which means that it belongs to $\i L_{k-1}^K(\O)$, so $g \in \i L_k^K(\O)$, and $g_{\s^{-1}(1)\t_{K-k+1}\dots \t_K}=g_{\t_{K-k}\t_{K-k+1}\dots \t_K} \in Int(\O)$, proving (3) for $g$. 

Moreover, (2) applied to $g_{\s^{-1}(1)}$ implies, by induction, (4) that $g_{\s^{-1}(1)}b_{K-k+1} \in \i L_{k-1}^K(\O)$ for all $b_{K-k+1} \in B_{K-k+1}$. As $(\s\r)^{-1}(1)=\s^{-1}(\r^{-1}(1))=\s^{-1}(1)$, this shows $gb_{K-k} \in \i L_{K-k}^K(\O)$, which is (4) for $g$.

Conversely, assume (3) for $g$, then $g_{\s^{-1}(1)} \in \i L_{k-1}^K(\O)$, and $g_{\t_{K-k}\t_{K-k+1}\dots \t_K}=g_{\s^{-1}(1)\t_{K-k+1}\dots \t_K} \in Int(\O)$, i.e. (3) for $g_{\s^{-1}(1)}$. As (3) implies (4) for $k-1$, one has $g_{\s^{-1}(1)}b_{K-k+1} \in \i L_{k-1}^K(\O)$ for all $b_{K-k+1} \in B_{K-k+1}$, so $gb_{K-k} \in L_k^K(\O)$ for all $b_{K-k} \in B_{K-k}$, which means (2) for $g$.
\end{proof}

\begin{remark}\label{combin}
The combinatorial description of Remark \ref{remcomb} still applies to an element $g \in \G_{K-k}$ of the form:
$$g=(g_{t_{K-k}\dots t_{K}})(\s_{t_{K-k}\dots t_{K-1}})\dots(\s_{t_{K-k}})\s,$$ 
with $t_{K-k+l} \in \{1,\dots,d_{K-k+l}\}$, $\s_{t_{K-k}\dots t_{K-k+l}}\in A_{K-k+l+1}$ and $g_{t_{K-k}\dots t_{K}} \in \G_{K+1}$. Such an element $g$ belongs to $L_k^K(\O)$ if and only if it satisfies the three following conditions: 
\begin{enumerate}
\item $\forall t_{K-k}\dots t_{K-1}$, the element $g_{t_{K-k}\dots t_{K-1}1}$ is in $\O\subset H_{K+1}$ and the elements $g_{t_{K-k}\dots t_{K-1}2},\dots,g_{t_{K-k}\dots t_{K-1}d_K}$ are in $A_{K+1}$,
\item $\forall t_{K-k}\dots t_{K-2}$, the set: 
\begin{eqnarray*} I(t_{K-k}\dots t_{K-2}) &=& \{T_{K-1}\in\{1,\dots,d_{K-1}\}| \s_{t_{K-k}\dots t_{K-2}T_{K-1}}^{-1}(1)=1\} \\ &=& 
\{T_{K-1}\in\{1,\dots,d_{K-1}\}|g_{t_{K-k}\dots t_{K-2}T_{K-1}} \in \i L_0^K(\O)\subset \G_K\} \end{eqnarray*}
is non-empty.
\item $\forall 2 \leq l \leq k, \forall t_{K-k}\dots t_{K-l}$, the following subset of $\{1,\dots,d_{K-l+1}\}$: 
\begin{eqnarray*} I(t_{K-k}\dots t_{K-l}) &=& \{T_{K-l+1}| \s_{t_{K-k}\dots t_{K-l}T_{K-l+1}}^{-1}(1) \in I(t_{K-k}\dots t_{K-l}T_{k-l+1}) \},  \\ &=& \{ T_{K-l+1}| g_{t_{K-k}\dots t_{K-l}T_{K-l+1}} \in \i L_{l-2}^K(\O)\subset \G_{K-l+2}\}, \end{eqnarray*} 
defined by induction on $l$, is non-empty.
\newline 
\newline
The element $g$ belongs to $\i L_k^K(\O)$ if and only if it satisfies (1), (2), (3) and moreover:
\item $\s^{-1}(1)$ belongs to the set:
$$ I(\emptyset)=\{T_{K-k}|\s_{T_{K-k}}^{-1}(1) \in I(T_{K-k})\} =\{T_{K-k}|g_{T_{K-k}}\in \i L_{k-1}^K(\O)\subset \G_{K-k+1}\}.$$
\newline
The element $g$ belongs to $Int( L_k^K(\O))$ if and only if it satisfies (1), (2), (3), (4) and moreover:
\item $g_{\t_{K-k}\dots \t_K} \in Int(\O)$.
\end{enumerate}
\end{remark}

This description and especially point (5) prove the:

\begin{fact}\label{int}
With respect to the generating set $A_{K-k}\cup B_{K-k}$ of the group $\G_{K-k}$, and the generating set $B_{K+1}$ of the group $H_{K+1}$, one has:
$$|Int(L_k^K(\O))|=|\i L_k^K(\O)|\frac{|Int(\O)|}{|\O|}. $$
\end{fact}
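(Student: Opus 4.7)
The plan is to read off the identity directly from the combinatorial description in Remark \ref{combin}. Characterizations (4) and (5) there show that
$$ Int(L_k^K(\O))=\{g \in \i L_k^K(\O) \mid g_{\t_{K-k}\dots\t_K} \in Int(\O)\}. $$
So the task reduces to showing that as $g$ varies over $\i L_k^K(\O)$, the distinguished leaf coordinate $g_{\t_{K-k}\dots\t_K}$ is uniformly distributed on $\O$.

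To prove this, I would partition $\i L_k^K(\O)$ according to the \emph{permutation skeleton} of the portrait, $P=(\s,(\s_{t_{K-k}\dots t_{K-k+j}})_{0\leq j\leq k-1})$. Conditions (2), (3) and (4) of Remark \ref{combin} constrain only $P$, not the leaf data; moreover, the indices $\t_{K-k},\t_{K-k+1},\dots,\t_K$ are defined recursively from $P$ alone. Thus once an admissible $P$ is fixed, the address $\t_{K-k}\dots\t_K$ is determined, and the remaining freedom in $g$ lies entirely in the bottom-level elements $g_{t_{K-k}\dots t_K}\in\G_{K+1}$, subject only to condition (1): each ``$1$''-leaf $g_{t_{K-k}\dots t_{K-1}1}$ ranges freely in $\O$, and each other leaf ranges freely in $A_{K+1}$, all choices independent of one another.

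Since $\t_K=1$ (which follows by iterating the defining conditions of $\i L_k^K(\O)$ down to the bottom level), the distinguished coordinate $g_{\t_{K-k}\dots\t_K}$ is one of the ``$1$''-leaves. Imposing condition (5) of Remark \ref{combin} then amounts to restricting this one specific coordinate from $\O$ to $Int(\O)$, while leaving every other independent choice untouched. Hence within each class indexed by $P$ the fraction of admissible leaf assignments that lie in $Int(L_k^K(\O))$ equals exactly $|Int(\O)|/|\O|$, independent of $P$. Summing over admissible skeletons $P$ yields the claimed identity.

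The only delicate point is verifying that $\t_{K-k}\dots\t_K$ depends only on the permutation data and is therefore constant within each class $P$; but this is immediate from the recursive definition $\t_{j+1}=\s_{\t_{K-k}\dots\t_j}^{-1}(1)$. Everything else is bookkeeping on the direct-product structure of the leaf layer.
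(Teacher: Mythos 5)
Your argument is exactly the one the paper intends: the paper's proof of Fact \ref{int} consists of the single sentence that the combinatorial description of Remark \ref{combin}, ``especially point (5)'', proves the identity, and your proposal simply fills in the details (the permutation skeleton determines the address $\t_{K-k}\dots\t_K$, conditions (2)--(4) constrain only the skeleton, $\t_K=1$ forces the distinguished leaf to be a ``$1$''-leaf ranging freely over $\O$, and condition (5) restricts that single independent coordinate to $Int(\O)$). The proposal is correct and takes essentially the same route as the paper.
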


In particular, the set $\i L_k^K(\O)$ is precisely the interior $Int(L_k^K(\O))$ when $Int(\O)=\O$. This happens when $H_{K+1}$ (hence $H_0$) is finite.

For $0\leq k \leq K$, set $\frac{|\i L_k^K(\O)|}{|L_k^K(\O)|}=1-\e_k$. The number $\e_k$ will be denoted $\e^K_k$ later on to emphasize the dependance on $K$. Lemma \ref{delta} generalizes as:

\begin{lemma}\label{receps}
The sequence $(\e_k)_{0\leq k \leq K}$ satisfies $\e_0=1-\frac{1}{d_K}$ and:
$$1-\e_{k+1}=\frac{1-\e_k}{1-\e_k^{d_{K-k-1}}}. $$
\end{lemma}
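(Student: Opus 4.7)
The plan is to mirror the proof of Lemma \ref{delta}, partitioning $L_{k+1}^K(\O)$ according to which coordinates land in the combinatorial interior, and then applying the same binomial identity.

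For the base case, Fact \ref{L0K} gives $|L_0^K(\O)| = |\O|\,|A_{K+1}|^{d_K-1}\,|A_K|$, and $|\i L_0^K(\O)|$ counts the subset with $\s^{-1}(1)=1$. Since $A_K$ acts transitively on $\{1,\dots,d_K\}$, the stabilizer of $1$ has index $d_K$ in $A_K$, so $\s^{-1}(1)=1$ holds for exactly a $1/d_K$ fraction of $\s\in A_K$. This yields $1-\e_0=1/d_K$.

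For the inductive step, write $d=d_{K-k-1}$ and, for each subset $I\subset\{1,\dots,d\}$ with $|I|\geq 1$, set
$$J_I=\bigl\{g=(g_1,\dots,g_d)\s\in\G_{K-k-1}\,\bigl|\,g_t\in \i L_k^K(\O)\iff t\in I,\text{ and }g_t\in L_k^K(\O)\ \forall t\bigr\}.$$
By the definition of $L_{k+1}^K(\O)$, it decomposes as the disjoint union $L_{k+1}^K(\O)=\bigsqcup_{|I|\geq 1}J_I$. Writing $N=|L_k^K(\O)|$ and $M=|\i L_k^K(\O)|=(1-\e_k)N$, a direct count gives $|J_I|=|A_{K-k-1}|\,M^{|I|}(N-M)^{d-|I|}$, since the $\s$-factor contributes $|A_{K-k-1}|$ and the $d$ coordinates are independent.

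Then, by the equivalence of (3) and (4) in the preceding lemma, $g\in J_I$ lies in $\i L_{k+1}^K(\O)$ exactly when $\s^{-1}(1)\in I$, and transitivity of $A_{K-k-1}$ forces this to occur for a $|I|/d$ fraction of the $\s$'s. Summing over $|I|=i\geq 1$ and invoking the two binomial identities $\sum_{i=1}^d\binom{d}{i}M^i(N-M)^{d-i}=N^d-(N-M)^d=N^d(1-\e_k^d)$ and $\sum_{i=1}^d\binom{d}{i}\tfrac{i}{d}M^i(N-M)^{d-i}=MN^{d-1}=(1-\e_k)N^d$ (the latter from differentiating $(x+y)^d$), we get
$$|L_{k+1}^K(\O)|=|A_{K-k-1}|\,N^d(1-\e_k^d),\qquad |\i L_{k+1}^K(\O)|=|A_{K-k-1}|\,(1-\e_k)N^d,$$
whose ratio is exactly the claimed recursion.

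There is no real obstacle here beyond bookkeeping; the only subtlety is ensuring that the uniform distribution of $\s^{-1}(1)$ on $\{1,\dots,d\}$ is available, which uses only transitivity of $A_{K-k-1}$ (part (1)(b) of Definition \ref{defdp}) together with the fact that the condition $\s^{-1}(1)\in I$ depends only on $\s$ and not on the $g_t$'s, so the $i/d$ factor factors cleanly out of each $|J_I\cap \i L_{k+1}^K(\O)|$.
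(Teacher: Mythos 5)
Your proof is correct and follows essentially the same route as the paper: the same decomposition $L_{k+1}^K(\O)=\bigsqcup_{|I|\geq 1}J_I$, the same count of $|J_I|$, the same $|I|/d$ factor (which, as you note, needs only the definition of $\i L_{k+1}^K(\O)$ and transitivity of $A_{K-k-1}$), and the same binomial-mean identity. The only cosmetic difference is that the membership criterion $\s^{-1}(1)\in I$ for $\i L_{k+1}^K(\O)$ is immediate from the definition of the combinatorial interior rather than needing the equivalence of (3) and (4) in the preceding lemma, but this does not affect the argument.
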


\begin{proof}
Given a subset $I \subset \{1,\dots,d_{K-k-1}\}$, denote:
$$J_I=\{g=(g_1,\dots,g_{d_{K-k-1}})\s|\forall T \in I,g_T \in \i L_k^K(\O) \textrm{ and }\forall t \notin I, g_t \in L_k^K(\O)\setminus \i L_k^K(\O) \}. $$
By definition, $L_{k+1}^K(\O)$ is the disjoint union $L_{k+1}^K(\O)=\sqcup_{|I| \geq 1}J_I$.

As in the proof of Lemma \ref{delta}, one has for $i=|I|$:
\begin{eqnarray*}
|J_I| &=& |A_{K-k-1}||L_k^K(\O)|^{d_{K-k-1}}(1-\e_k)^i\e_k^{d_{K-k-1}-i}, \\
|J_I\cap \i L_{k+1}^K(\O)| &=&\frac{i}{d_{K-k-1}}|J_I|.
\end{eqnarray*}
Again by use of the mean of binomial distribution, get:
\begin{eqnarray*}
|\i L_{k+1}^K(\O)|&=& \sum_{i=1}^{d_{K-k-1}} C_{d_{K-k-1}}^i (1-\e_k)^i\e_k^{d_{K-k-1}-i}\frac{i}{d_{K-k-1}}|L_k^K(\O)|^{d_{K-k-1}}|A_{K-k-1}|
\\ &=& (1-\e_k)|L_k^K(\O)|^{d_{K-k-1}}|A_{K-k-1}|, \\
|L_{k+1}^K(\O)| &=& \sum_{i=1}^{d_{K-k-1}} C_{d_{K-k-1}}^i (1-\e_k)^i\e_k^{d_{K-k-1}-i}|L_k^K(\O)|^{d_{K-k-1}}|A_{K-k-1}|\\
&=&(1-\e_k^{d_{K-k-1}})|L_k^K(\O)|^{d_{K-k-1}}|A_{K-k-1}|.
\end{eqnarray*}
This proves the lemma.
\end{proof}

\begin{lemma}\label{epsilon}
If $\frac{d_k}{\log k}\longrightarrow 0$, then $\e_K^K \longrightarrow 0$.

If $d_k \leq D$ for all $k$, then $\e_K^K=O(K^{-\eta})$ for all $\eta <\frac{1}{D-1}$.
\end{lemma}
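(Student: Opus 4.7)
First I would rewrite the recursion of Lemma \ref{receps} in a more tractable form. Factoring $1 - v^d = (1-v)(1+v+\cdots+v^{d-1})$ turns the identity $1 - \e_{k+1} = (1-\e_k)/(1-\e_k^{d_{K-k-1}})$ into
\begin{equation*}
\e_{k+1} = f_{d_{K-k-1}}(\e_k), \qquad f_d(v) \;:=\; 1 - \frac{1}{1+v+v^2+\cdots+v^{d-1}},
\end{equation*}
equivalently $v - f_d(v) = v^d/(1+v+\cdots+v^{d-1})$. From this form, both the fact that $(\e_k^K)_k$ is decreasing and the joint monotonicity of $f_d$ in $v \in [0,1)$ and in the integer $d\geq 2$ are immediate. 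Setting $D = D_K := \max_{0 \leq j \leq K} d_j$ and using $\e_0^K = 1 - 1/d_K \leq 1 - 1/D$, an induction on $k$ gives $\e_k^K \leq v_k$, where $(v_k)$ denotes the constant-$D$ iteration with $v_0 = 1 - 1/D$ and $v_{k+1} = f_D(v_k)$. It then suffices to bound $v_K$.

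For this constant-$D$ iteration I would split the analysis into an initial \emph{slow phase} and a subsequent \emph{polynomial decay phase}. While $v_k \geq 1/2$, the crude bound $1+v_k+\cdots+v_k^{D-1} \leq D$ yields $v_k - v_{k+1} \geq v_k^D/D \geq 2^{-D}/D$, so $v_k$ falls below $1/2$ at some step $k_0 \leq D\cdot 2^{D-1}$. Once $v_k \leq 1/2$, instead $1+v_k+\cdots+v_k^{D-1} \leq 2$, giving $v_{k+1} \leq v_k(1 - v_k^{D-1}/2)$; setting $w_k = v_k^{-(D-1)}$ and applying $(1-x)^{-(D-1)} \geq 1 + (D-1)x$ produces the clean additive increment
\begin{equation*}
w_{k+1} \;\geq\; w_k + \frac{D-1}{2} \qquad (k \geq k_0),
\end{equation*}
which telescopes to $v_k \leq \bigl((D-1)(k-k_0)/2\bigr)^{-1/(D-1)}$ for $k \geq k_0$.

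Combining the two phases, for $K \geq 2k_0$ one obtains $\e_K^K \leq v_K \leq \bigl((D-1)K/4\bigr)^{-1/(D-1)}$. When $d_k \leq D$ is bounded, $D$ and $k_0$ are constants, so absorbing them yields $\e_K^K = O(K^{-\eta})$ for every $\eta < 1/(D-1)$. For the general case, the hypothesis $d_k/\log k \to 0$ forces $D_K = o(\log K)$, hence $2k_0 \leq D_K \cdot 2^{D_K} = K^{o(1)} = o(K)$ and the threshold $K \geq 2k_0$ is eventually satisfied. Taking logarithms,
\begin{equation*}
\log \e_K^K \;\leq\; -\,\frac{\log K}{D_K-1} + O(1) \;\longrightarrow\; -\infty,
\end{equation*}
since $D_K - 1 = o(\log K)$ makes the leading term diverge. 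Thus $\e_K^K \to 0$.

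The main obstacle is the initial slow phase: near $v = 1 - 1/D$ the per-step decrement is only of order $1/(D\cdot 2^D)$, so $k_0$ is exponential in $D$. The logarithmic-scale hypothesis $d_k = o(\log k)$ is used precisely to guarantee that this exponential threshold $2^{D_K}$ remains negligible compared to $K$, and this is the only point in the argument where the specific $\log$-scale on the valencies is needed.
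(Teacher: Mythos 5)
Your proof is correct, and it takes a genuinely different route from the paper's. The paper keeps the recursion in the multiplicative form $\e_{k+1}=\e_k\, f(D_{k+1},\e_k)$ with $f(D,\e)=\frac{1-\e^{D-1}}{1-\e^{D}}$ (which is exactly your $f_d(v)/v$), proves the same two monotonicity properties (Fact \ref{f}), and then argues with a free threshold: for any $E\in(0,1)$, as long as $\e_k\geq E$ each step contracts by the uniform factor $f(D(K),E)<1$, whence $\e_K^K\leq\max\{E,\,f(D(K),E)^K\}$; one then chooses $E=E_K$ ($E_K=K^{-\eta}$ in the bounded case, and any $E_K\to 0$ with $|D(K)\log E_K|=o(\log K)$ in general) so that $KE_K^{D(K)-1}\to\infty$. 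You instead work with the additive decrement $v-f_d(v)=v^d/(1+\cdots+v^{d-1})$ and run a two-phase analysis, the substitution $w_k=v_k^{-(D-1)}$ turning the second phase into a telescoping sum. Each step of yours checks out: the monotone comparison with the constant-$D$ iteration, the Bernoulli estimate giving $w_{k+1}\geq w_k+(D-1)/2$, and the bound $D_K2^{D_K}=K^{o(1)}=o(K)$ under $d_k=o(\log k)$. What your version buys is a slightly sharper bounded-valency rate (the endpoint $O(K^{-1/(D-1)})$ rather than $O(K^{-\eta})$ for $\eta<1/(D-1)$) and a more transparent explanation of where the sublogarithmic hypothesis enters, namely that the slow phase of length exponential in $D_K$ must remain negligible against $K$; what the paper's version buys is brevity, compressing the whole difficulty into the single asymptotic $KE_K^{D(K)-1}\to\infty$.
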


First check the elementary:

\begin{fact}\label{f}
Let $f(D,\e)=\frac{1-\e^{D-1}}{1-\e^D}$, for $D \geq 2$ and $\e \in (0,1)$. Then for fixed $D$, the function $f(D,\e)$ is decreasing with $\e$, and for fixed $\e$, the function $f(D,\e)$ is increasing with $D$.
\end{fact}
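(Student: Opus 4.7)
The plan is to rewrite $f(D,\e)$ in a form that makes both monotonicity claims immediate. Using the geometric series factorization $1-\e^n=(1-\e)(1+\e+\dots+\e^{n-1})$, the factor $1-\e$ cancels and
$$ f(D,\e)=\frac{1+\e+\dots+\e^{D-2}}{1+\e+\dots+\e^{D-1}}=1-\frac{\e^{D-1}}{1+\e+\dots+\e^{D-1}}. $$
Dividing numerator and denominator of the right-most fraction by $\e^{D-1}$ gives the single useful formula
$$ f(D,\e)=1-\frac{1}{\sum_{j=0}^{D-1}\e^{-j}}, $$
which we are free to use because $\e\in(0,1)$ (so each $\e^{-j}$ is finite and positive).

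From this identity both statements fall out by inspection. For fixed $\e$, replacing $D$ by $D+1$ appends the strictly positive term $\e^{-D}$ to the sum $\sum_{j=0}^{D-1}\e^{-j}$; the sum strictly increases, its reciprocal strictly decreases, and hence $f(D,\e)$ strictly increases. For fixed $D$, each summand $\e^{-j}$ with $j\geq 1$ is a strictly decreasing function of $\e$ on $(0,1)$ (the $j=0$ term is the constant $1$), so the sum $\sum_{j=0}^{D-1}\e^{-j}$ strictly decreases with $\e$, its reciprocal strictly increases, and therefore $f(D,\e)$ strictly decreases.

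There is no real obstacle here; the only thing to watch is that the naive rewriting as a ratio of partial geometric sums can obscure monotonicity in $\e$ (both numerator and denominator then move in the same direction). Dividing through by $\e^{D-1}$ to express the correction term as the reciprocal of a sum of negative powers of $\e$ eliminates this issue and lets one read off both monotonicities from the signs of a single sum. If one preferred a calculus proof, the same conclusions follow by differentiating: $\partial_\e f$ has the sign of $-(D-1)+D\e-\e^D$, a function of $\e$ that vanishes at $\e=1$ and has strictly positive derivative $D(1-\e^{D-1})$ on $(0,1)$, so it is negative throughout; and $f(D+1,\e)-f(D,\e)$ has numerator $\e^{D-1}(1-\e)^2>0$ after clearing denominators. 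Either route is a short computation, so I would record the geometric-series version for brevity.
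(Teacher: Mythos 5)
Your proof is correct, and it takes a genuinely different route from the paper. The paper proves both monotonicities by differentiating: it computes $(1-\e^D)^2\,\partial f/\partial\e=\e^{D-2}(1-\e)\bigl(\e^{D-1}+\dots+\e-(D-1)\bigr)<0$ and, treating $D$ as a continuous variable, $(1-\e^D)^2\,\partial f/\partial D=\e^{D-1}(\e-1)\log\e>0$. You instead cancel the factor $1-\e$ and rewrite $f(D,\e)=1-\bigl(\sum_{j=0}^{D-1}\e^{-j}\bigr)^{-1}$, after which both claims are read off from the sign and monotonicity of a single sum; your identity checks out, and the two ``fall-out'' arguments are valid (the $j=0$ term being constant does no harm, since the terms with $j\geq 1$ already force strict decrease for $D\geq 2$). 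Your approach buys a calculus-free argument that treats $D$ as the integer it actually is, and it localizes all the monotonicity information in one formula; the paper's approach buys brevity (two derivative computations, no preliminary rewriting) at the cost of extending $f$ to real $D$ for the second claim. Your appended calculus check of $\partial_\e f$ is essentially the paper's own computation in unfactored form, and your discrete difference $f(D+1,\e)-f(D,\e)$ with numerator $\e^{D-1}(1-\e)^2$ is also correct. Either version is a complete proof.
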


\begin{proof} Compute derivatives:
$$(1-\e^D)^2\frac{\partial f}{\partial \e}(D,\e)=\e^{D-2}(1-\e)(\e^{D-1}+\dots+\e^2+\e-(D-1))<0, $$
$$(1-\e^D)^2\frac{\partial f}{\partial D}(D,\e)=\e^{D-1}(\e-1)\log \e>0. $$
\end{proof}

\begin{proof}[Proof of Lemma \ref{epsilon}]
For a fixed $K$, and $0 \leq k \leq K$, set $D_k=d_{K-k}$, and $D(K)=\max_{0\leq k \leq K}\{d_k\}=o(\log K)$. By Lemma \ref{receps}, one has:
$$\e_{k+1}=\e_k\frac{1-\e_k^{D_{k+1}-1}}{1-\e_k^{D_{k+1}}}=\e_k f(D_{k+1},\e_k).$$
By fact \ref{f}, as long as $\e_k \geq E$, one has:
$$\e_{k+1}\leq \e_k f(D_{k+1},E)\leq \e_k f(D(K),E), $$
so $\e_K=\e_K^K\leq \max \{E,f(D(K),E)^K\}$ for any $E \in (0,1)$.
Now consider a sequence $E_K \longrightarrow 0$ so that $|D(K)\log E_K|=o(\log K)$ (it exists). One has:
\begin{eqnarray*}
f(D(K),E_K)^K &=& \exp K( \log(1-E_K^{D(K)-1})-\log(1-E_K^{D(K)}) ),\\
&=&\exp (-KE_K^{D(K)-1}+O(KE_K^{D(K)})) \longrightarrow 0,
\end{eqnarray*}
because $KE_K^{D(K)-1} \longrightarrow +\infty$. This shows $\e_K^K\longrightarrow 0$.

If moreover $d_k \leq D$, take $E_K=K^{-\eta}$ with $\eta < \frac{1}{D-1}$, then:
$$f(D,E_K)^K=\exp(-K^{1-\eta(D-1)}+O(K^{1-\eta D}))=o(K^{-\eta}), $$
so $\e_K^K=O(K^{-\eta})$.
\end{proof}

\begin{proof}[Proof of Theorem \ref{MT}]
By Fact \ref{int}, one has:
$$\frac{|Int(L_K^K(\O))|}{|L_K^K(\O)|}=\frac{|\i L_K^K(\O)|}{|L_K^K(\O)|}\frac{|Int(\O)|}{|\O|}=(1-\e_K^K)\frac{|Int(\O)|}{|\O|}. $$
As the group $H_{K+1}$ is amenable by Fact \ref{fact2}, the set $\O$ can be chosen with $\frac{|Int(\O)|}{|\O|}$ arbitrarily close to $1$. By Lemma \ref{epsilon}, this shows that there exists a sequence of sets $\O_K \subset H_{K+1}$ so that the sets $L_K^K(\O_K)\subset \G_0$ form a Folner sequence. 
\end{proof}

\section{Examples of groups with property $\Dc\Pc$}\label{examples}

\subsection{Alternate directed groups} 

Given a sequence $\bar d=(d_i)_{i\in\N}$ of integers $d_i \geq 2$, set:
$$AT_i=AT(d_i,d_{i+1})=(\Ac_{d_{i+1}}\times \dots \times \Ac_{d_{i+1}})\rtimes \Ac_{d_i-1}=\Ac_{d_{i+1}}\wr \Ac_{d_i-1}, $$
where $\Ac_d$ is the alternate group of even permutations of the set $\{1,\dots,d\}$, there are $d_i-1$ factors in the product (indexed by $\{2,\dots,d_i\}$), and $\Ac_{d_i-1}$ acts by permuting these factors. Consider the countable infinite direct product:
$$H_{\bar d}^{alt}=\prod_{i=0}^\infty AT_i=\prod_{i=0}^\infty \Ac_{d_{i+1}}\wr \Ac_{d_i-1}. $$
Its elements are denoted as sequences $h=(h_i)_{i=0}^\infty$ with $h_i=(a_{i,2},\dots,a_{i,d_i})\r_i \in AT_i$.

The group $H_{\bar d}^{alt}$ acts faithfully on the spherically homogeneous rooted tree $T_{\bar d}$ in the direction of the ray $1^\infty$, where under the canonical isomorphism (\ref{iso}), one has:
$$(h_i)_{i=0}^\infty=((h_i)_{i=1}^\infty,a_{0,2},\dots,a_{0,d_0})\r_0, $$
where $\r_0 \in \Ac_{d_0-1}\simeq Fix_{\Ac_{d_0}}(1)$. Inductively under isomorphism $Aut(T_{s^{k}\bar d}) \simeq Aut(T_{s^{k+1} \bar d})\wr S_{d_k}$, one has $(h_i)_{i=k}^\infty=((h_i)_{i=k+1}^\infty,a_{k,2},\dots,a_{k,d_k})\r_k$.

On the other hand, the group $\Ac_{d_0}$ acts on $T_{\bar d}$ by rooted automorphisms:
$$\Ac_{d_0}\ni a=(e,\dots,e)a. $$

\begin{definition} An alternate directed group $G$ is a subgroup of $Aut^{alt}(T_{\bar d})$ with generating set $A \cup H$, with $A \subset \Ac_{d_0}$ and $H \subset H_{\bar d}^{alt}$. Denote:
$$G(A,H)=\langle A \cup H\rangle <Aut^{alt}(T_{\bar d}). $$
\end{definition}

When the sequence $\bar d$ is constant $d_i=d$, if $A\simeq\Ac_d$ and $H\simeq \Ac_d \wr \Ac_{d-1}$ is diagonaly embedded into the direct product $H_{\bar d}^{alt}$, then $G(A,H)=G_d$ is the alternate mother group of section \ref{first}. Directed groups (not necessarily alternate) satisfy the same definition without requirement that the permutations involved are even, that is with $S_d$ instead of $\Ac_d$ and $H_{\bar d}=\prod_{i=0}^\infty S_{d_{i+1}}\wr S_{d_i-1}$ instead of $H_{\bar d}^{alt}$ (see \cite{Bri1}, \cite{Bri3}).

\subsection{Case of bounded valency} In this section, assume that the sequence $\bar d$ is bounded $5 \leq d_i \leq D$. Let $B \subset H_{\bar d}^{alt}$ be a finite subset, and denote its elements by $\b=(\b_i)_{i=0}^\infty\in H_{\bar d}^{alt}$. Then for each $i$, the set $\{\b_i,\b \in B\}$ is a $B$-indexed subset of $AT_i=AT(d_i,d_{i+1})$. As the valency sequence $\bar d$ is bounded, there is a finite set of pairs:
$$\{(AT(s),\{\b(s),\b \in B\}),s \in J\}, $$
such that for any $i$, there exists $s(i)$ in the finite set $J$ with $(AT_i,\{\b_i,\b \in B\})=(AT(s(i)),\{\b(s(i)),\b \in B\})$, as pairs of finite groups with $B$-indexed subsets.

This provides an isomorphism:
$$H_{\bar d}^{alt}>H=\langle \b,\b\in B\rangle\simeq \langle (\b(s))_{s\in J},\b \in B \rangle< \prod_{s \in J} AT(s). $$
The group $H$ is said saturated if $H=\prod_{s \in J}AT(s)$. (Mind a difference with the notion of saturation in \cite{Bri1} and \cite{Bri3}, where it was only required that $H$ surjects on each factor $AT(s)$. The present condition is slightly stronger.) Finiteness of $J$ shows the:
\begin{fact}\label{saturation}
If $\bar d$ is bounded, any finitely generated subgroup of $H_{\bar d}^{alt}$ is contained in a finite saturated subgroup $H$. 
\end{fact}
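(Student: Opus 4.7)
The plan is a pigeonhole argument on the coordinates of the generators. Given a finitely generated subgroup $G \subset H_{\bar d}^{alt}$, fix a finite generating set $B$. For each $i \in \N$, I associate the pair $(AT_i, (\b_i)_{\b \in B})$, consisting of the finite group $AT_i = \Ac_{d_{i+1}} \wr \Ac_{d_i-1}$ together with the $B$-indexed tuple of $i$-th coordinates of the generators. Since $5 \le d_i \le D$, the group $AT_i$ takes only finitely many isomorphism types, and for each type there are only finitely many possible $B$-indexed tuples of its elements. Hence, modulo the natural equivalence of pairs (a group isomorphism $AT_i \to AT_j$ matching the $B$-tuples coordinate by coordinate), only finitely many equivalence classes arise. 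Let $J$ be the finite set of classes, with representatives $(AT(s), (\b(s))_{\b \in B})$, and write $s(i) \in J$ for the class of the index $i$.

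Next I would construct the saturated $H$ as the diagonal copy of $\prod_{s \in J} AT(s)$ inside $H_{\bar d}^{alt}$. Concretely, for each $i \in \N$ fix once and for all an isomorphism $\iota_i : AT(s(i)) \to AT_i$ realizing the equivalence, i.e.\ with $\iota_i(\b(s(i))) = \b_i$ for every $\b \in B$. Define
$$ H = \{ (\iota_i(x_{s(i)}))_{i \in \N} : (x_s)_{s \in J} \in \prod_{s \in J} AT(s) \} \subset H_{\bar d}^{alt}.$$
By construction $H$ is a subgroup isomorphic to $\prod_{s \in J} AT(s)$, hence finite, and it coincides with the full product under the identification, so it is saturated in the sense defined immediately above the fact.

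Finally, to close the argument, I verify $G \subset H$. For any generator $\b \in B$, the element $(\b(s))_{s \in J} \in \prod_{s \in J} AT(s)$ maps under the diagonal embedding to $(\iota_i(\b(s(i))))_{i \in \N} = (\b_i)_{i \in \N} = \b$, by the defining property of $\iota_i$. So each generator of $G$ lies in $H$, and since $H$ is a subgroup, $G \subset H$. I do not anticipate a serious obstacle: the argument is routine pigeonhole and bookkeeping. The only point deserving attention is the simultaneous choice of the isomorphisms $\iota_i$ realizing the equivalence for all of $B$ at once, which is possible precisely because the equivalence of pairs was defined to respect all of $B$ simultaneously.
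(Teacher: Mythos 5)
Your proposal is correct and follows essentially the same route as the paper: the paper's (very terse) argument is exactly the pigeonhole on the finitely many pairs $(AT_i,\{\b_i,\b\in B\})$ set up in the paragraph preceding the Fact, followed by taking $H$ to be the full product $\prod_{s\in J}AT(s)$ embedded ``diagonally'' along the fibers of $i\mapsto s(i)$; you merely spell out the bookkeeping (the paper uses literal equality of pairs, so your isomorphisms $\iota_i$ can all be taken to be identities). No gap.
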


The following proposition will permit to show amenability of all directed groups acting on a tree of bounded valency.

\begin{proposition}\label{dpbounded}
Let $\bar d$ be a bounded sequence of integers $d_i \geq 5$. If $H<H_{\bar d}^{alt}$ is a finite saturated subgroup, then the alternate directed group $G(\Ac_{d_0},H)<Aut(T_{\bar d})$ has property $\Dc\Pc$.
\end{proposition}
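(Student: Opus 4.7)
The plan is to exhibit an explicit sequence $(\G_i)_{i \in \N}$ witnessing property $\Dc\Pc$ for $\G_0 = G(\Ac_{d_0}, H)$. I take $\G_i = G(\Ac_{d_i}, H^{(i)})$ acting on the shifted tree $T_{s^i \bar d}$, where $H^{(0)} = H$ and $H^{(i+1)} \subset H^{alt}_{s^{i+1} \bar d}$ is the ``tail'' of $H^{(i)}$: writing $h \in H^{(i)}$ under the canonical wreath decomposition (\ref{iso}) as $h = ((h_j)_{j \geq i+1}, a_{i,2}, \ldots, a_{i,d_i}) \r_i$, the element $(h_j)_{j \geq i+1}$ lies in $H^{(i+1)}$ by definition. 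Since $\bar d$ is bounded the set $J_i$ of isomorphism types $AT(s)$ appearing in $s^i \bar d$ is finite, with $J_{i+1} \subseteq J_i$, and the saturation property $H^{(i)} = \prod_{s \in J_i} AT(s)$ is inherited as $H^{(i+1)} = \prod_{s \in J_{i+1}} AT(s)$, because each factor of the saturated direct product projects independently under the shift.

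Condition (1) of Definition \ref{defdp} is then immediate: $\Ac_{d_i} \cup H^{(i)}$ generates $\G_i$ by construction, $\Ac_{d_i}$ is finite and transitive on $\{1, \ldots, d_i\}$, and $H^{(i)}$ is finite. For condition (2), $\f_i$ is the restriction to $\G_i$ of the canonical wreath isomorphism (\ref{iso}) of $Aut^{alt}(T_{s^i \bar d})$. Condition (2)(a) is the definition of rooted automorphisms $a \mapsto (e, \ldots, e) a$, and (2)(b) is precisely the shift structure above, with $\r_i \in \Ac_{d_i - 1} \simeq Fix_{\Ac_{d_i}}(1)$.

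The main obstacle is verifying that $\f_i$ is \emph{surjective} onto $\G_{i+1} \wr \Ac_{d_i}$, not merely an embedding. By the transitive action of $\Ac_{d_i}$ on coordinates, this reduces to showing $\G_{i+1} \times \{e\}^{d_i - 1} \subset \f_i(\G_i)$, and this is a direct generalization of Fact \ref{perfect1}. I would follow the same strategy: for $h, h' \in H^{(i)}$ whose heads are of ``$a_2$-type'' $(\a_2, e, \ldots, e) e \in AT(d_i, d_{i+1})$, available by saturation, and a rooted $\t \in \Ac_{d_i}$ with $\t(1) = 1$ and $\t^{-1}(2) = 3$, the commutator $\f_i([h, h'^\t])$ has the form $(c, e, \ldots, e)$ with $c$ a commutator of the tails. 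The perfectness of $\Ac_{d_{i+1}}$ (valid since $d_{i+1} \geq 5$) extracts every element of $\Ac_{d_{i+1}}$ in the $(d_i, d_{i+1})$-factor of $H^{(i+1)}$, while the rooted correction $h_\emptyset a(\r_i^{-1})$ applied to a ``$\r$-type'' element $h_\emptyset \in H^{(i)}$ with head $(e, \ldots, e) \r_i$ handles the $\Ac_{d_i - 1}$-permutation component without perfectness. Together with a multiplication against rooted $A_{i+1}$-elements (as in the final paragraph of Fact \ref{perfect1}), and the saturation of $H^{(i)}$ ensuring both types of generators exist in each factor $AT(s)$ of $H^{(i+1)}$, these combinations exhaust $\G_{i+1} \times \{e\}^{d_i - 1}$. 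Saturation is essential: without it, the constructions would land in a proper subgroup of $\prod_{s \in J_{i+1}} AT(s)$ and surjectivity of $\f_i$ would fail.
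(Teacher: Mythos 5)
Your proposal is correct and follows essentially the same route as the paper: the same sequence of tail groups $H^{(i)}$ (the paper's $H_k$), the same reduction of all conditions of Definition \ref{defdp} to surjectivity of $\f_i$, and the same verification of surjectivity by extracting the $a_2$-type and $\r$-type elements $h(2)$, $h(\emptyset)$ via saturation and rerunning the commutator argument of Fact \ref{perfect1} and Proposition \ref{wr}. Nothing further is needed.
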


\begin{proof}
Set $A_k=\Ac_{d_k}$, $H_k=\{(h_i)_{i=k}^\infty| (h_i)_{i=0}^\infty \in H\}$, define $\G_k=G(A_k,H_k)<Aut^{alt}(T_{s^k \bar d})$ and check that the sequence $\{\G_k\}_{k \in \N}$ has property $\Dc\Pc$. In order to ease notations, we treat the case $k=0$, the general case is similar.


The only non-trivial point in order to verify the conditions of Definition \ref{defdp} is surjectivity of the isomorphism:
$$\f_0:G(\Ac_{d_0},H) \longrightarrow G(\Ac_{d_1},H_1)\wr \Ac_{d_0}. $$

Given $h=(h_i)_{i=0}^\infty$ in $H_{\bar d}$ with $h_i=(a_{i,2},\dots,a_{i,d_i})\r_i$, set:
$$h(2)=((a_{i,2},e,\dots,e)e)_{i=0}^\infty \textrm{, and } h(\emptyset)=((e,\dots,e)\r_i)_{i=0}^\infty. $$
In each factor $AT(s)=\Ac_{d'(s)}\wr \Ac_{d(s)-1}$, the subset
$$\{ (a_2,e,\dots,e)|a_2 \in \Ac_{d'(s)}\} \cup \{(e,\dots,e)\r|\r \in \Ac_{d(s)-1}\} $$
generates the group $AT(s)$. Thus by saturation
$$\langle h(2),h \in H \rangle \simeq \prod_{s \in J} \Ac_{d'(s)} \times \{e\} \times \dots \times \{e\}  \textrm{, and } \langle h(\emptyset),h \in H \rangle \simeq \prod_{s \in J} \Ac_{d'(s)}.$$
So saturation shows that the subsets $H(2)=\{h(2),h \in H\}$ and $H(\emptyset)=\{h(\emptyset),h \in H\}$ are subgroups of $H$, and moreover $\langle H(2) \cup H(\emptyset)\rangle=H$.

The proofs of Fact \ref{perfect1} and Proposition \ref{wr} apply directly, replacing the generators $b_2=b(\a_2,e,\dots,e,e_A)$ and $b_\emptyset=b(e,\dots,e,\r)$ by $h(2)$ and $h(\emptyset)$ respectively.
\end{proof}

Let $\s$ be a permutation of the set $\{1,\dots,d\}$. Denote $\s'$ another copy of $\s$ acting on the set $\{d+1,\dots,2d\}$ by $\s'(t)=\s(t-d) +d$, and consider the embedding $a:S_d \hookrightarrow \Ac_{2d}$ given by $a(\s)=\s\s'$. It can be extended to furnish:
$$a:Aut(T_{\bar d}) \rightarrow Aut^{alt}(T_{\bar{2d}}), $$
an embedding of the group of automorphisms of the tree $T_{\bar d}$ into the group of alternate automorphisms of the tree $T_{\bar{2d}}$.

Indeed, let $\g\in Aut(T_{\bar d})$ be described by a family of permutations $\{\s_v\}_{v \in T_{\bar d}}$, where $\s_v \in S_{d_k}$ for every $v=t_1\dots t_k$ in $T_{\bar d}$. The automorphism $a(\g)$ is described by a family of permutations $\{a(\g)_v\}_{v \in T_{\bar{2d}}}$ given by $a(\g)_v=a(\g_v) \in \Ac_{2d_k}$ for $v=t_1\dots t_k$ in $T_{\bar d} \subset T_{\bar{2d}}$ and $a(\g)_v=e$ for $v \in T_{\bar{2d}} \setminus T_{\bar{d}}$.

\begin{fact}\label{directalt}
Directed elements have directed image under $a$, i.e. $a(H_{\bar d}) \subset H_{\bar{2d}}^{alt}$. In particular, the mother group of degree 0 acting on a $d$-regular tree embeds in the alternate mother group $G_{2d}$ acting on a $2d$-regular tree.
\end{fact}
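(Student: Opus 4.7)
The plan is to reduce the claim to a direct calculation with the first-level wreath expansion of $a$. First I would check that $a:S_d\to\Ac_{2d}$, $\s\mapsto\s\s'$, is an injective group homomorphism landing in $\Ac_{2d}$: since $\s$ and $\s'$ act on disjoint subsets they commute, they share the same sign so their product is even, and injectivity is immediate. The portrait-wise extension $a:Aut(T_{\bar d})\to Aut^{alt}(T_{\bar{2d}})$ then satisfies the key identity
\begin{equation*}
a\bigl((g_1,\dots,g_{d_0})\s\bigr)=\bigl(a(g_1),\dots,a(g_{d_0}),e,\dots,e\bigr)a(\s),
\end{equation*}
which can be read directly off the definition, since the subtrees attached to the ``new'' vertices $d_0+1,\dots,2d_0$ carry identity portraits. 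From this the homomorphism property is routine, because $a(\s)$ permutes the ``old'' and ``new'' halves of $\{1,\dots,2d_0\}$ separately.

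For the inclusion $a(H_{\bar d})\subset H_{\bar{2d}}^{alt}$, I would take $h=(h_i)_{i=0}^\infty$ with $h_i=(a_{i,2},\dots,a_{i,d_i})\r_i$, write $h=((h_i)_{i=1}^\infty,a_{0,2},\dots,a_{0,d_0})\r_0$, and apply the key identity to obtain
\begin{equation*}
a(h)=\bigl(a((h_i)_{i=1}^\infty),a(a_{0,2}),\dots,a(a_{0,d_0}),e,\dots,e\bigr)a(\r_0).
\end{equation*}
Iterating along the ray $1^\infty$ by induction would then realise $a(h)$ as the element of $H_{\bar{2d}}^{alt}$ whose $i$-th factor is $\tilde h_i=(a(a_{i,2}),\dots,a(a_{i,d_i}),e,\dots,e)a(\r_i)\in\Ac_{2d_{i+1}}\wr\Ac_{2d_i-1}$. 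Two conditions must be verified at each step: that the rooted entries lie in $\Ac_{2d_{i+1}}$ (immediate), and that $a(\r_i)\in Fix_{\Ac_{2d_i}}(1)=\Ac_{2d_i-1}$. The latter is the one delicate point and the closest thing to a main obstacle; it holds because the definition of directed element forces $\r_i(1)=1$, while $\r_i'$ acts on $\{d_i+1,\dots,2d_i\}$ and so also fixes $1$.

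The corollary about the mother group of degree $0$ then falls out: its rooted generators $\s\in S_d$ map to $a(\s)\in\Ac_{2d}$, the rooted generating set of $G_{2d}$, and its directed generators $b(a_2,\dots,a_d,\r)$ (with $a_t\in S_d$, $\r\in Fix_{S_d}(1)$) map by the first part of the fact to $b(a(a_2),\dots,a(a_d),e,\dots,e,a(\r))$, which lies in $B\subset G_{2d}$. Injectivity of the portrait extension of $a$ then furnishes the claimed embedding.
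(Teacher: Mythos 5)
Your proof is correct and rests on the same observation as the paper's: $a$ acts portrait-wise with support inside $T_{\bar d}\subset T_{\bar{2d}}$, so the support condition defining directed elements is preserved, and $a(\r_i)=\r_i\r_i'$ still fixes $1$ because $\r_i$ does and $\r_i'$ is supported on $\{d_i+1,\dots,2d_i\}$. The paper reads this off the portrait characterization of directedness in one line, whereas you unwind the same fact via the first-level wreath recursion and induction along the ray $1^\infty$; the additional verifications (homomorphism property of the extension, explicit factors $\tilde h_i$, image of the generators of the mother group) are sound but do not constitute a different argument.
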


\begin{proof} 
As a shortcut denote $1^k$ for the sequence $11\dots 1$ with $k$ ones. By definition, an automorphism $\g$ is directed if and only if $\s_{1^k}\in Fix_{S_{d_k}}(1)\simeq S_{d_k-1}$ and $\s_v=e$ if $v$ is not of the form $1^{k-1}t$ for some $t$ in $\{1,\dots,d_k\}$. This is still the case for $a(\g)$.
\end{proof}

The following result from \cite{Bri1} can now be reproved.
\begin{corollary}
Directed groups acting on a tree of bounded valency are amenable.
\end{corollary}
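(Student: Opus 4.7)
The plan is to reduce the general (non-alternate) directed case to the alternate case covered by Theorem \ref{MT}, via the embedding $a$ of Fact \ref{directalt}. Given a finitely generated directed group $G = G(A,H_0) < Aut(T_{\bar d})$ with bounded valency ($d_i \leq D$), I would first apply $a$ to obtain an injective homomorphism $G \hookrightarrow Aut^{alt}(T_{\bar{2d}})$. Since $a(A) \subset \Ac_{2d_0}$ (each $\sigma\sigma'$ is even because $\sigma$ and $\sigma'$ have the same sign) acts by rooted alternate automorphisms, and by Fact \ref{directalt} one has $a(H_0) \subset H_{\bar{2d}}^{alt}$, the image $a(G)$ sits inside the alternate directed group $G(\Ac_{2d_0}, a(H_0)) < Aut^{alt}(T_{\bar{2d}})$.

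Next, $a(H_0)$ is finitely generated since $H_0$ is, and the valency sequence $\bar{2d}$ is bounded by $2D$, so Fact \ref{saturation} provides a finite saturated subgroup $H < H_{\bar{2d}}^{alt}$ containing $a(H_0)$. Hence
$$a(G) \subset G(\Ac_{2d_0}, H) < Aut^{alt}(T_{\bar{2d}}).$$
Proposition \ref{dpbounded} then shows that $G(\Ac_{2d_0}, H)$ has property $\Dc\Pc$ with $H$ finite, provided $2d_i \geq 5$ for all $i$. This is automatic as soon as $d_i \geq 3$; in the degenerate case where some $d_i = 2$, one replaces the doubling map $a$ by the obvious tripling map $\sigma \mapsto \sigma\sigma'\sigma''$ into $\Ac_{3d}$, which still sends directed elements to directed elements and forces $3d_i \geq 6$.

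Finally, since $H$ is finite it is amenable, and the valency sequence $\bar{2d}$ (or $\bar{3d}$) is bounded, so the hypothesis $\frac{2d_k}{\log k}\to 0$ of Theorem \ref{MT} holds trivially. The theorem concludes that $G(\Ac_{2d_0}, H)$ is amenable, and since amenability is inherited by subgroups, $a(G) \cong G$ is amenable as well.

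The substantive work has already been carried out in Theorem \ref{MT}, Proposition \ref{dpbounded}, and Fact \ref{directalt}, so no genuine obstacle remains: the proof is essentially a packaging argument. The only minor point requiring attention is ensuring the valency inequality $d_i \geq 5$ in Proposition \ref{dpbounded}, handled by the doubling or tripling trick above.
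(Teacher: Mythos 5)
Your main line of argument is exactly the paper's proof: embed $\G$ into $Aut^{alt}(T_{\bar{2d}})$ via the doubling map $a$ of Fact \ref{directalt}, enlarge $a(H_0)$ to a finite saturated subgroup using Fact \ref{saturation}, invoke Proposition \ref{dpbounded} to get property $\Dc\Pc$, and conclude with Theorem \ref{MT} plus the fact that amenability passes to subgroups. That part is correct and is the same packaging the paper uses.

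The one place you go beyond the paper --- the tripling trick for handling $d_i=2$ --- is wrong as stated. The sign of $\s\s'\s''$, a product of three disjoint copies of $\s$, is $(\operatorname{sgn}\s)^3=\operatorname{sgn}\s$, so the tripling map does \emph{not} land in $\Ac_{3d}$ when $\s$ is odd; only an even number of disjoint copies yields an even permutation. The repair is immediate: use four copies $\s\mapsto\s\s'\s''\s'''$ (sign $(\operatorname{sgn}\s)^4=1$, and $4d_i\geq 8\geq 5$), or equivalently apply the doubling map twice. With that substitution your argument is complete; note that the paper itself silently ignores the case $2d_i<5$, so your instinct to address it was sound even though the specific device fails.
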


\begin{proof}
Let $\G$ be a directed group, with generating set $S \cup H$ where $S\subset S_{d_0}$ and $H\subset H_{\bar d}$. By fact \ref{directalt}, the group $a(\G) < Aut^{alt}(T_{\bar{2d}})$ is alternate and directed. By fact \ref{saturation}, it can be included in a directed, alternate and saturated subgroup of $Aut^{alt}(T_{\bar{2d}})$, which has property $\Dc\Pc$ by Proposition \ref{dpbounded}, hence $a(\G)$ is amenable by Theorem \ref{MT}, since $\bar{2d}$ is bounded and $H_0$ finite. The group $\G$ is also amenable as a subgroup.
\end{proof}

\begin{corollary}[Main theorem in \cite{BKN}]
Automata groups with bounded activity are amenable.
\end{corollary}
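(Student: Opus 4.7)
The plan is to embed any automaton group of bounded activity into a directed group acting on a rooted tree of bounded valency, and then invoke the previous corollary. Fix a Mealy automaton $(Q,\d,\o)$ acting on $T_d$ with bounded activity, meaning that for each state $q\in Q$ and each level $k\geq 0$, the number of vertices $v$ of level $k$ with $q_{|v}\neq e$ is bounded uniformly in $k$, and let $\G$ be the group it generates.

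First, I would recall the standard structural description of bounded automata, essentially due to Sidki. Each state $q\in Q$ of bounded activity acts non-trivially only along finitely many infinite rays in $T_d$, and along each such ray it takes a ``directed'' form: at every level one distinguished section continues to the next vertex along the ray, while the other sections lie in a finite subset of $Q$ (which itself is again of bounded activity). Up to enlarging $Q$ to be closed under taking sections, I may therefore assume that every state has all its non-trivial activity concentrated along at most $r$ rays, for a uniform $r$ depending only on the automaton.

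Next, I would perform a level-regrouping: pass from $T_d$ to a spherically homogeneous tree $T_{\bar e}$ whose $k$th level is obtained by collapsing a bounded number of consecutive levels of $T_d$. The number of collapsed levels is chosen large enough that, inside each new vertex, the at most $r$ rays of non-trivial activity of any generator fall into distinct children. The valency sequence $\bar e$ stays bounded (by some power of $d$). After this identification, each generator of $\G$ becomes a directed automorphism of $T_{\bar e}$ in the sense of Section~\ref{examples}, and $\G$ sits inside a directed group $G(A,H)<Aut(T_{\bar e})$ with $H$ finitely generated and $\bar e$ bounded.

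Finally, the previous corollary gives amenability of the ambient directed group, and amenability of $\G$ follows since amenability passes to subgroups. The main obstacle is the regrouping step: one must carry it out uniformly for all generators at once, verify that the resulting action is genuinely directed (not merely an arbitrary self-similar subgroup of $Aut(T_{\bar e})$), and check that the new valencies remain bounded. This amounts to a careful combinatorial repackaging of Sidki's description of bounded automata, after which Theorem~\ref{MT} and the bounded-valency corollary do all the analytic work.
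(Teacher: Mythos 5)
There is a genuine gap, and it sits exactly where you locate it: the regrouping step. In this paper ``directed'' is a very rigid notion: an element of $H_{\bar d}$ has, at every level, exactly one non-finitary section, namely the one along the distinguished ray $1^\infty$, and all its other sections are \emph{rooted} (depth one, lying in $A_{k+1}$). Your regrouped generators do not have this form, for two independent reasons. First, a state whose non-trivial activity lives on $r\geq 2$ rays has, at every regrouped level, $r$ non-finitary sections; no choice of how many levels you collapse can reduce this to one, so such a generator is never an element of $H_{\bar e}$. Second, even for single-ray states, different generators are active along different (eventually periodic) rays, and making them all directed along the common ray $1^\infty$ would require conjugating each by a different tree automorphism, which does not yield an embedding of the group $\G$. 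What your sketch actually needs is the decomposition of each bounded automorphism as a product of finitary automorphisms and \emph{single-ray} directed automorphisms, together with the observation that the mother group of degree $0$ --- whose directed subgroup $B$ is the entire finite group $(S_d)^{d-1}\rtimes S_{d-1}$ and is therefore stable under all the conjugations and changes of ray that arise --- already contains all these pieces after a suitable change of alphabet. That statement is precisely Theorem~3.3 of \cite{BKN}, and it is the nontrivial combinatorial input; your proposal in effect tries to reprove it but stops at the point where the real difficulty begins.

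The paper does not attempt this: it cites \cite{BKN} for the embedding of a bounded-activity automata group into the mother group of degree $0$ on a $d$-regular tree, then applies Fact~\ref{directalt} (the doubling embedding $a:Aut(T_{\bar d})\to Aut^{alt}(T_{\bar{2d}})$) to land in the alternate mother group $G_{2d}$, and concludes by Theorem~\ref{deg0}. Note also that your final appeal to ``the previous corollary'' is redundant once the embedding into a mother group is granted, since $G_{2d}$ is itself a directed group of bounded valency; the substance of the corollary is entirely in the embedding, which your argument does not yet supply.
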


\begin{proof}
By Theorem 3.3 in \cite{BKN}, an automata group $\G$ with bounded activity is a subgroup of the alternate mother group of degree 0 acting on a $d$-regular tree for $d$ large enough. By Fact \ref{directalt}, $\G$ is a subgroup of $G_{2d}$, hence is amenable by Theorem \ref{deg0}.
\end{proof}

\subsection{Examples with unbounded valency} This section aims at constructing examples of groups with property $\Dc\Pc$ for which the sequence $\bar d$ of fact \ref{tree} is unbounded.

Let $H$ be a finitely generated, residually finite, perfect group with a sequence of normal subgroups $(N_i)_{i\geq 0}$ of finite index so that each quotient $A_i=H/N_i$ is perfect, acting faithfully and transitively on a finite set $\{1,\dots,d_i\}$ of size $d_i \geq 2$. For $h$ in $H$, denote $a_i(h)=hN_i \in A_i$. Assume moreover that there exists $\t_i \in A_i$ such that $\t_i(1)=1$ and $\t_i^{-1}(2) \notin \{1,2\}$.

To the group $H=H_0$ together with subgroup sequence $(N_k)_{k \geq 0}$ is associated an action on the rooted tree $T_{\bar d}$ of valency sequence $\bar d=(d_k)_{k\geq 0}$, denoted $b_0:H_0\rightarrow Aut(T_{\bar d})$, given by the portrait $(b_0(h))_{1^{k-1}2}=a_k(h)$ and $(b_0(h))_v=e$ if $v$ is not of the form $1^{k-1}2$ for $k\geq 1$ (notation $1^k$ is a shortcut for $11\dots 1$ with $k$ ones). 

More generally, to $H_i=H$ together with the sequence $(N_k)_{k \geq i}$ is associated an action on $T_{s^i \bar d}$ denoted $b_i : H_i \rightarrow Aut(T_{s^i \bar d})$, given by the portrait $(b_i(h))_{1^{k-1}2}=a_{k+i}(h)$ and $(b_i(h))_v=e$ if $v$ is not of the form $1^{k-1}2$.

The group $A_i=H_i/N_i=H/N_i$ also acts on $T_{s^i \bar d}$ as a rooted automorphism acting on $\{1,\dots ,d_i\}$, i.e. $a_i(h)=(e,\dots,e)a_i(h)$. Set $\G_i=\langle A_i \cup H_i\rangle < Aut(T_{s^i \bar d})$, with the $b_i$ action.

\begin{fact}\label{classinf}
The sequence of groups $\{\G_i\}_{i \in \N}$ has property $\Dc\Pc$.
\end{fact}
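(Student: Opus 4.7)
The plan is to verify the two conditions of Definition \ref{defdp} for the sequence $\{\G_i\}_{i\in\N}$, taking $\f_i$ to be the restriction to $\G_i$ of the canonical wreath product isomorphism $Aut(T_{s^i\bar d}) \simeq Aut(T_{s^{i+1}\bar d}) \wr S_{d_i}$. Condition (1) is immediate from the setup: $A_i = H/N_i$ is finite and acts transitively on $\{1,\dots,d_i\}$ by hypothesis, $H_i = H$ is finitely generated, and $\G_i = \langle A_i \cup H_i\rangle$ by definition. Condition (2)(a) is trivial since the elements of $A_i$ have been declared to act as rooted automorphisms. For (2)(b), the portrait of $b_i(h)$, namely $(b_i(h))_{1^{k-1}2} = a_{k+i}(h)$ and trivial elsewhere, unpacks directly as
\[ \f_i(b_i(h)) = (b_{i+1}(h),\, a_{i+1}(h),\, e,\, \dots,\, e)\cdot e, \]
whose root permutation $e$ fixes $1$, whose first coordinate is $b_{i+1}(h) \in H_{i+1}$, and whose second coordinate is $a_{i+1}(h) \in A_{i+1}$.

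The substantive point is the surjectivity of $\f_i$ onto $\G_{i+1} \wr A_i$, for which we imitate the scheme of Fact \ref{perfect1}. Using the $\t_i \in A_i$ supplied by hypothesis (with $\t_i(1)=1$ and $\t_i^{-1}(2)\notin\{1,2\}$), the rooted conjugate $\f_i(b_i(h'))^{\t_i}$ equals $(b_{i+1}(h'),\, e,\, \dots,\, a_{i+1}(h'),\, \dots,\, e)\, e$, with the factor $a_{i+1}(h')$ relocated from position $2$ to some position $\geq 3$, while the first coordinate stays in place because $\t_i(1)=1$. Since both $\f_i(b_i(h))$ and $\f_i(b_i(h'))^{\t_i}$ have trivial root permutation, their commutator factors componentwise, yielding
\[ [\f_i(b_i(h)),\, \f_i(b_i(h'))^{\t_i}] = (b_{i+1}([h,h']),\, e,\, \dots,\, e)\cdot e \in \f_i(\G_i). \]
Perfectness of $H$ then allows any $h'' \in H$ to be written as a product of commutators, so $(b_{i+1}(h''),\, e,\, \dots,\, e)\, e \in \f_i(\G_i)$ for every $h'' \in H$.

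To also place rooted elements of $A_{i+1}$ in the first coordinate, we left-multiply $\f_i(b_i(h))$ by $(b_{i+1}(h)^{-1},\, e,\, \dots,\, e)\, e$ to isolate $(e,\, a_{i+1}(h),\, e,\, \dots,\, e)\, e$, and then conjugate by a rooted $\s \in A_i$ with $\s(1) = 2$ (available because $A_i$ acts transitively on $\{1,\dots,d_i\}$) to transport $a_{i+1}(h)$ into position $1$. Since the projection $a_{i+1}: H \to A_{i+1}$ is surjective, this produces $(a,\, e,\, \dots,\, e)\, e \in \f_i(\G_i)$ for every $a \in A_{i+1}$, and combining with the directed case gives $(g,\, e,\, \dots,\, e)\, e \in \f_i(\G_i)$ for every $g \in \G_{i+1} = \langle A_{i+1} \cup b_{i+1}(H)\rangle$. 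Transitivity of $A_i$ then lets us relocate $g$ to any coordinate by conjugation, and multiplying recovers the whole direct product $\G_{i+1} \times \dots \times \G_{i+1}$. Together with $A_i$ acting at the root, this exhausts $\G_{i+1} \wr A_i$, proving surjectivity; injectivity of $\f_i$ is inherited from the canonical wreath isomorphism.

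The main obstacle is the commutator step, which works only because two distinct hypotheses cooperate: the combinatorial assumption on $\t_i$ (without which the two active coordinates of $b_i(h)$ and $b_i(h')^{\t_i}$ could not be simultaneously separated while keeping the first coordinate intact), and the perfectness of $H$ (without which only the commutator subgroup of $b_{i+1}(H)$ would be recovered in the first coordinate). Both are explicitly built into the setting of the fact, so once one commits to replicating the mechanism of Fact \ref{perfect1}, the verification of property $\Dc\Pc$ proceeds without further difficulty.
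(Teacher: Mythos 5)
Your proof is correct and follows essentially the same route as the paper: the embedding comes from the portrait identity $\f_i(b_i(h))=(b_{i+1}(h),a_{i+1}(h),e,\dots,e)$, surjectivity is obtained by conjugating with the rooted $\t_i$, taking commutators and invoking perfectness of $H$ to fill the first coordinate with $H_{i+1}$, then isolating and transporting the rooted $A_{i+1}$ part and using transitivity of $A_i$. The only difference is that you spell out the routine verifications (conditions (1), (2)(a), (2)(b), and the final assembly of the direct product) that the paper leaves implicit.
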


\begin{proof}
In the wreath product isomorphism $\f_i$ of (\ref{iso}) for $T_{s^i \bar d}$, one has:
\begin{eqnarray}\label{surj}\f_i(b_{i}(h))=(b_{i+1}(h),a_{i+1}(h),e,\dots,e).\end{eqnarray}
Thus it induces an embedding $\f_i: \G_i \hookrightarrow \G_{i+1} \wr A_i$. The only non-trivial point in the conditions of Definition \ref{defdp} is to check that this embedding is onto. 

As $\f_i(b_i(h')^{\t_i})=(b_{i+1}(h'),e,\dots,a_{i+1}(h'),\dots,e)$, one has $\f_i([b_i(h')^{\t_i},b_i(h)])=(b_{i+1}([h',h]),e,\dots,e)$. As $H$ is perfect, the image contains $H_{i+1} \times \{e\} \times \dots \times \{e\}$, and also $\{e\} \times A_{i+1} \times \{e\} \times \dots \times \{e\}$ by (\ref{surj}). Moreover, the image contains $A_i$ rooted which has a transitive action on $\{1,\dots,d_i\}$. Thus $\f_i(\G_i)$ finally contains $\G_{i+1}\wr A_i$.
\end{proof}


As an example of such a finitely generated, residually finite, perfect group $H$, one may take the alternate mother group $G_d$ of section \ref{first} for $d\geq 6$ (for which both finite generating subgroups $A$ and $B$ are perfect). This group satisfies $G_d \simeq G_d \wr \Ac_d $. Its finite index normal subgroups are:
$$St_j=\ker (G_d \rightarrow \Ac_d \wr \dots \wr \Ac_d), $$
where the $j$ factors in the iterated wreath product are obtained by iteration of the above isomorphism. The group $St_j$ is called stabilizer of level $j$ of the group $G_d$. The quotient $G_d/St_j$ is acting transitively on level $j$, which is the set $\{1,\dots,d\}^j$. By \cite{Neu}, these stabilizers $St_j$ are the only finite index normal subgroups of $G_d$.

For an arbitrary function $j:\N\rightarrow \N$, take $N_k=St_{j(k)}$ as a sequence of normal subgroups. The group $\G_0$ defined by $H=G_d$ together with the function $j(k)$ has property $\Dc\Pc$ by Fact \ref{classinf}. It is amenable when $d^{j(k)}$ is sublogarithmic by Theorem \ref{MT}. Note that in the construction above, one could use any group of Proposition \ref{dpbounded} with $d_i \geq 6$ instead of $G_d$.

\textbf{Acknowledgements.} I wish to thank Prof. Tsuyoshi Kato and Prof. Andrzej Zuk for interesting discussions and comments. I also wish to thank the anonymous referee for valuable suggestions. This work was realized during a JSPS Postdoctoral Fellowship Program (PE11006) at Kyoto University.

\end{document}